\documentclass[english,russian,12pt]{amsart} 
\usepackage{nicefrac}
\usepackage[inline]{enumitem}
\usepackage{enumitem}
\usepackage{amssymb}
\usepackage[usenames,dvipsnames,svgnames,table]{xcolor}
\usepackage{fouriernc}
\usepackage{courier}
\usepackage[marginparwidth=2cm]{geometry}
\geometry{hmargin=2.5cm, vmargin=2.5cm }
\usepackage{array,float}

\usepackage{todonotes}



\usepackage{graphicx,subfigure,epsfig}
\usepackage{subfigure}
\usepackage{epstopdf}
\usepackage{mathtools}

\usepackage{amssymb,amsmath}

\newtheorem{theorem}{Theorem}
\newtheorem{definition}{Definition}
\newtheorem{lemma}{Lemma}
\newtheorem{remark}{Remark}

\newtheorem{corollary}{Corollary}
\newtheorem{example}{Example}
\newtheorem{proposition}{Proposition}

 \newcommand{\ind}{{\operatorname{Ind}}}
 \newcommand{\sgn}{{\operatorname{sgn}}}
 \newcommand{\nJ}{{\nabla J}}
\newcommand{\VL}{{\mathcal{VL}_{\text{flat}}}}



\title{Recurrent Generalization of F-Polynomials for Virtual Knots and Links}

\thanks{A. Gill and M. Prabhakar were supported by DST (project number DST/INT/RUS/RSF/P-02), M. Ivanov was supported by RFBR (grant number 19-01-00569),  A. Vesnin was  supported by RSF (grant number 20-61-46005).}

\author{Amrendra Gill}
\address{Department of Mathematics, Indian Institute of Technology Ropar; 140001 Punjab, India}
\email{amrendra.gill@iitrpr.ac.in}

\author{Maxim Ivanov}
\address{Laboratory of Topology and Dynamics, Novosibirsk State University, 630090 Novosibirsk, Russia}
\email{m.ivanov2@g.nsu.ru}

\author{Madeti Prabhakar}
\address{Department of Mathematics, Indian Institute of Technology Ropar; 140001 Punjab, India}
\email{prabhakar@iitrpr.ac.in}

\author{Andrei Vesnin}
\address{Tomsk State University, Tomsk, Russia; Higher School of Economics, Moscow, Russia; Sobolev Institute of Mathematics, Siberian Branch of the Russian Academy of Sciences, 630090 Novosibirsk, Russia}
\email{vesnin@math.nsc.ru}

\begin{document}

\begin{abstract}
F-polynomials for virtual knots were defined by Kaur, Prabhakar and Vesnin in 2018 using flat virtual knot invariants. These polynomials  naturally generalize Kauffman's affine index polynomial and use smoothing in classical crossing of a virtual knot diagram. In this paper we introduce weight functions for ordered orientable virtual and flat virtual link. A flat virtual link is an equivalence class of virtual links in respect to a local symmetry changing  type of classical crossing in a diagram. By considering three types of smoothings in classical crossings of a virtual link diagram and suitable weight functions, we provide a recurrent construction for new invariants. 
We demonstrate by  providing explicit examples, that newly defined polynomial invariants are stronger than F-polynomials.
\end{abstract}

\subjclass[2010]{57K12}
\keywords{difference writhe; virtual knot invariant; flat virtual knot invariant.}

\maketitle

\tableofcontents

\section{Introduction}

Theory of virtual knots and links was introduced by Kauffman in~\cite{Ka99} as a generalization of classical knot theory.  It was observed by Kuperberg in~\cite{Ku03} that the study of virtual knots is naturally related to the study of knots and links embedded in 3-manifolds which are thickened surfaces. Following the Kauffman's approach we consider virtual links as equivalence classes of virtual link diagrams up the equivalence relations corresponding to generalized Reidemeister moves. Recall that a virtual knot or link diagram is 4-regular planar graph, where each vertex is indicated as either classical or virtual crossing.  We will consider oriented virtual links, where each  virtual crossing is depicted by placing a small circle at the vertex. In Figure~\ref{fig1}, there are presented two types of classical crossings and the virtual crossing of an oriented virtual link diagram.

Polynomial invariants based on the index values in classical crossings were introduced by Cheng and Gao~\cite{CG13}, known as th writhe polynomial,  and by Kauffman~\cite{Ka13}, known as the affine index polynomial. The connection of the affine index polynomial with the virtual knot cobordism is described in~\cite{Ka18}. For related polynomial invariants and their properties see~\cite{Ka20, Me16, Pe20, Sa16}. Three kinds of invariants of a virtual knot called the first, second and third intersection polynomials were introduced recently in~\cite{Satoh21}. F-polynomials for oriented virtual knots were constructed in~\cite{KPV18} as a generalization of the affine index polynomial. The construction of F-polynomials is based on invariants of flat virtual knots. Flat virtual link is an equivalence class of virtual links in respect to a local symmetry changing  type of classical crossing in a diagram. Two types of classical crossings are  presented in Figure~\ref{fig1}.   \linebreak  F-polynomials were calculated for tabulated virtual knots in~\cite{IV20} and~\cite{VI20}, and successfully used to distinguish some oriented virtual knots in~\cite{GPV20}.  Another approach to construct invariants of flat virtual knots can be based on representation of flat virtual braids by automorphisms of free groups, see, for example,~\cite{BC20}. 

\begin{figure}[h]
\centering
\unitlength=0.6mm
\begin{picture}(0,30)(0,5)
\thicklines
\qbezier(-70,10)(-70,10)(-50,30)
\qbezier(-70,30)(-70,30)(-62,22)
\qbezier(-50,10)(-50,10)(-58,18)
\put(-65,25){\vector(-1,1){5}}
\put(-55,25){\vector(1,1){5}}
\qbezier(10,10)(10,10)(-10,30)
\qbezier(10,30)(10,30)(2,22)
\qbezier(-10,10)(-10,10)(-2,18)
\put(-5,25){\vector(-1,1){5}}
\put(5,25){\vector(1,1){5}}
\qbezier(70,10)(70,10)(50,30)
\qbezier(70,30)(70,30)(50,10)
\put(55,25){\vector(-1,1){5}}
\put(65,25){\vector(1,1){5}}
\put(60,20){\circle{4}}
\end{picture}
\caption{Two classical and one virtual crossings.} \label{fig1}
\end{figure}
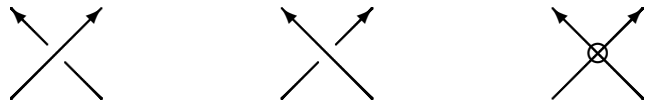

The paper is organized as the following. Section~\ref{sec2}  contains some preliminary information about generalized Reidemeister moves, definitions of sign and index of a classical crossings, orientation revising smoothing of an oriented virtual knot diagram, and $F$-polynomials introduced in~\cite{KPV18}.  In Section~\ref{sec3} we introduce weight functions associated with classical crossings (see Definitions~\ref{def1} and~\ref{def2}). In Section~\ref{sec4} we use weight functions to define I-function and flat I-function (see Definitions~\ref{def5} and~\ref{def6}. We prove in Theorem~\ref{theorem1} that these functions are invariants of ordered oriented virtual links and ordered oriented flat virtual links, respectively.  All introduced  notions are illustrated by Examples~\ref{example1}--\ref{example4}.   In Section~\ref{sec5} we consider two more types of smoothings in classical crossings and construct corresponding weight functions taking values in the free $\mathbb Z$-module, generated by ordered oriented flat virtual links (see Theorem~\ref{theorem2}). Such invariants, corresponding to type-2 smoothings, are constructed in Corollary~\ref{cor1} and used in Example~\ref{example5} to demonstrate that the virtual Kishino knot, a famous connected sum of two trivial virtual knots, is non-trivial. In Section~\ref{sec6} we presents a recurrent construction of a sequences of invariants, see Proposition~\ref{prop1} and realize this method to define a multi-variable generalization of the $F$-polynomial in Theorem~\ref{theorem3}. In Section~\ref{sec7} we define $(n,m)$-difference writhe of a virtual knot diagram and use it to construct an invariant $F_K^{n,m,k} (t, \ell_1, \ell_2)$ of an oriented virtual knots in Theorem~\ref{theorem4}. Also, we demonstrate  that this invariant is stronger than $F$-polynomials. In Section~\ref{sec8} we introduce an ordered virtual link invariant denoted by $\operatorname{span}_{n,k} (L)$ and its flat version $\operatorname{fspan}_{n,k} (L)$, that is an ordered flat virtual link invariant, see Theorem~\ref{theorem5}. Then we use $\operatorname{fspan}_{n,k} (L)$  to construct a family of oriented virtual knot invariants $\widetilde{F}^{n,k,m}(t, \ell, v)$ on variables $t$, $\ell$ and $v$ in Theorem~\ref{theorem6} and demonstrate that these  3-variable polynomials are stronger than F-polynomials.

\section{Preliminaries} \label{sec2}

We will say that $n$-component link $L$ is \emph{ordered}, if its components are labelled by different integers from $1$ to $n$. Analogously, ordered virtual links and ordered flat virtual links can be defined. Ordered knots are the particular 1-component case. 

In this paper we consider ordered oriented virtual links and ordered flat virtual links. Forgetting about ordering, we will get usual oriented virtual links and oriented flat virtual links. When ordering is not important in our considerations, we don't present labels of components in figures.  


Two diagrams of ordered oriented virtual links are \emph{equivalent} if and only if one can be obtained from another by a finite sequence of generalized Reidmeister moves. By generalized Reidemeister moves we mean the union of classical Reidemeister moves and virtual Reidemeister moves. The non-oriented versions of these moves are presented  in Figure~\ref{fig2}, and oriented versions can be obtained by considering all possible orientations of link components. 
 \begin{figure}[ht]
\centering
\subfigure[Classical Reidemeister moves.]
{\includegraphics[height=4.cm]{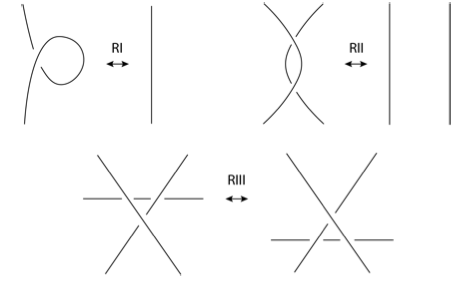} 
} \medskip \subfigure[Virtual Reidemeister moves.]
{
\includegraphics[height=4.cm]{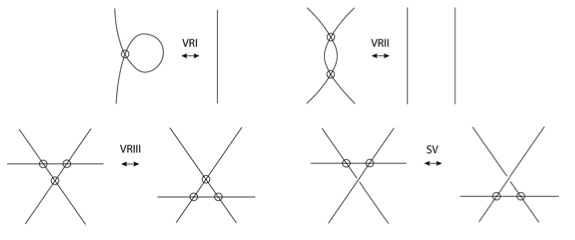} 
}		
\caption{Reidemeister moves.} \label{fig2}
\end{figure}

 An ordered oriented virtual link is defined as an equivalence class of link diagrams modulo generalized Reidemeister moves. For a link diagram $D$ we denote the set of its classical crossings by $C(D)$. By $D^{*}$ we denote a \emph{mirror image} of $D$ that is  a diagram obtained from $D$ by changing all classical crossings, and for $c \in C(D)$ we denote by  $c^{*}$ a crossing corresponding to $c$ in $C(D^{*})$.  An arc of a diagram is an edge of a corresponding 4-regular graph.
An object (or a quantity) associated to a diagram which remains invariant under all generalized Reidemeister moves is called an \emph{ordered oriented virtual link invariant}.

The \emph{sign} of a classical crossing $c \in C(D)$, denoted by $\operatorname{sgn}(c)$, is defined as in Figure~\ref{fig3}.
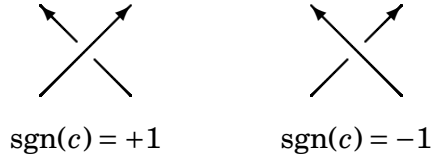
\begin{figure}[!ht]
\centering
\unitlength=0.6mm
\begin{picture}(0,35)
\thicklines
\qbezier(-40,10)(-40,10)(-20,30)
\qbezier(-40,30)(-40,30)(-32,22)
\qbezier(-20,10)(-20,10)(-28,18)
\put(-35,25){\vector(-1,1){5}}
\put(-25,25){\vector(1,1){5}}
\put(-30,0){\makebox(0,0)[cc]{$\operatorname{sgn}(c)=+1$}}
\qbezier(40,10)(40,10)(20,30)
\qbezier(40,30)(40,30)(32,22)
\qbezier(20,10)(20,10)(28,18)
\put(25,25){\vector(-1,1){5}}
\put(35,25){\vector(1,1){5}}
\put(30,0){\makebox(0,0)[cc]{$\operatorname{sgn}(c)=-1$}}
\end{picture}
\caption{Crossing signs.} \label{fig3}
\end{figure}

Next we assign an integer value to each arc in $D$ in such a way that the labeling around each crossing point of $D$ follows the rule as shown in Figure~\ref{fig4}.
\begin{figure}[!ht]
\centering
\unitlength=0.6mm
\begin{picture}(0,35)(0,5)
\thicklines
\qbezier(-70,10)(-70,10)(-50,30)
\qbezier(-70,30)(-70,30)(-62,22)
\qbezier(-50,10)(-50,10)(-58,18)
\put(-65,25){\vector(-1,1){5}}
\put(-55,25){\vector(1,1){5}}
\put(-75,34){\makebox(0,0)[cc]{$b+1$}}
\put(-75,8){\makebox(0,0)[cc]{$a$}}
\put(-45,8){\makebox(0,0)[cc]{$b$}}
\put(-45,34){\makebox(0,0)[cc]{$a-1$}}
\qbezier(10,10)(10,10)(-10,30)
\qbezier(10,30)(10,30)(2,22)
\qbezier(-10,10)(-10,10)(-2,18)
\put(-5,25){\vector(-1,1){5}}
\put(5,25){\vector(1,1){5}}
\put(-15,34){\makebox(0,0)[cc]{$b+1$}}
\put(-15,8){\makebox(0,0)[cc]{$a$}}
\put(15,8){\makebox(0,0)[cc]{$b$}}
\put(15,34){\makebox(0,0)[cc]{$a-1$}}
\qbezier(70,10)(70,10)(50,30)
\qbezier(70,30)(70,30)(50,10)
\put(55,25){\vector(-1,1){5}}
\put(65,25){\vector(1,1){5}}
\put(60,20){\circle{4}}
\put(45,34){\makebox(0,0)[cc]{$b$}}
\put(45,8){\makebox(0,0)[cc]{$a$}}
\put(75,8){\makebox(0,0)[cc]{$b$}}
\put(75,34){\makebox(0,0)[cc]{$a$}}
\end{picture}
\caption{Labeling around crossing.} \label{fig4}
\end{figure}
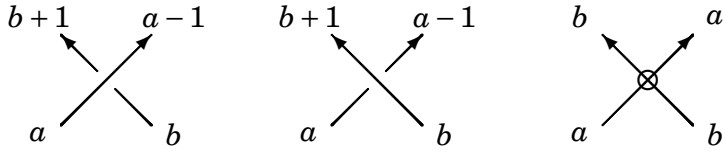
The \emph{index} value for a classical crossing $c \in C(D)$, denoted by $\operatorname{Ind}(c)$, is defined as 
\begin{equation} 
\operatorname{Ind}(c) = \operatorname{sgn} (c)(a-b-1). \label{eqn1}
\end{equation}
Then the \emph{affine index polynomial} of a knot $K$ is defined via its diagram $D$ as
$$
P_{K}(t) = \sum _{c \in C(D)} \operatorname{sgn}(c)(t^{\operatorname{Ind}(c)}-1).
$$
For properties and applications of $P_{K}(t)$ see~\cite{CG13, Ka13, Ka18, Ka20}.

For each $n \in \mathbb{Z} \setminus \{0\}$ the  \emph{$n$-th writhe $J_n(D)$} of a virtual knot diagram $D$ is defined as the number of positive sign crossings minus number of negative sign crossings of $D$ with index value $n$. The $n$-th writhe is a virtual knot invariant. Using $n$-th writhe, a new invariant \emph{$n$-th d-writhe} (difference writhe) of $D$ denoted by $\nabla J_{n}(D)$ was defined in~\cite{KPV18} as 
\begin{equation}
 \nabla J_{n}(D)=J_{n}(D)-J_{-n}(D). \label{eqn2}
\end{equation}
Thus $\nabla J_{n}(D)$ is a flat virtual knot invariant. For a classical crossing  $c \in C(D)$ denote by $D_c$ a diagram, obtained from $D$ by the \emph{type-1 smoothing} of the diagram $D$ at the crossing $c$, see Figure~\ref{fig5}. This smoothing was used in~\cite{KPV18} and called a \emph{smoothing against orientation}. 
\begin{figure}[ht]
	\centering
	\includegraphics[scale=0.6]{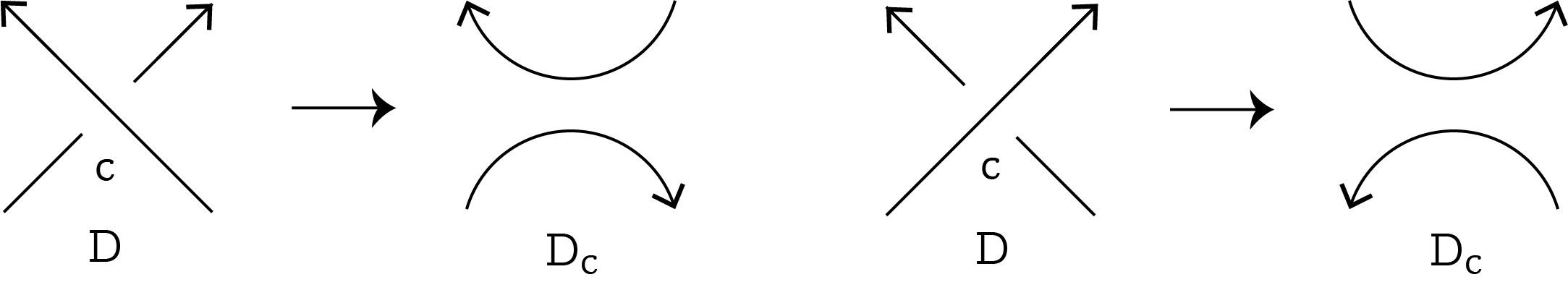}
	\caption{Type-1 smoothing (smoothing against orientation).} \label{fig5}
\end{figure}

Values $\nabla J_n(D_c)$ for $c \in C(D)$ were used in~\cite{KPV18} to construct a family of polynomial invariants $F^n_K(t, \ell)$. An \emph{$n$-th $F$-polynomial} of a knot $K$ is defined via its diagram $D$ as
$$
F_{K}^{n}(t,\ell) = \sum_{c \in C(D)} \operatorname{sgn}(c)t^{\text{Ind}(c)} \ell^{\nabla J_{n}(D_{c})} -  \sum _{c\in T_{n}(D)} \operatorname{sgn}(c) \ell^{\nabla J_{n}(D_{c})} - \sum _{c\notin T_{n}(D)} \operatorname{sgn} (c) \ell^{\nabla J_{n}(D)},
$$
where  $T_n(D)$ consists of classical crossings of the diagram $D$ with the following property:
$$
T_{n}(D)=\{c \in C(D)  \, \mid  \,  \nabla J_{n}(D_{c}) = \pm \nabla J_{n}(D)\}.
$$
For properties and applications of $F_{K}^{n}(t,\ell)$ see~\cite{IV20, KPV18, VI20}.

\section{Weight functions} \label{sec3}

Let $\mathcal D$ be a subset of all ordered oriented virtual link diagrams with the following property: if $D \in \mathcal D$ then all diagrams obtained from $D$ by generalized Reidemeister moves, crossing change operation, reversing orientation and reordering of components also belong to $\mathcal D$. We call $\mathcal D$ a \emph{regular set of diagrams}. The corresponding set of links is said to be a \emph{regular set of ordered oriented virtual links}. By forgetting ordering of components we obtain a \emph{regular set of unordered oriented virtual links}, and similarly, by forgetting types of classical crossings  we obtain a \emph{regular set of ordered oriented flat virtual links}. 

Denote by $C(\mathcal D)$ the set of all classical crossings of diagrams $D \in \mathcal D$.

\begin{definition} \label{def1} \rm 
Let  $G$ be an abelian group and $w : C(\mathcal D) \to G$ be a function which assigns  a \emph{value} $w(c) \in G$ to a classical crossing $c \in C(D)$ for all diagrams $D \in \mathcal D$. Function $w$ is said to be a \emph{weight function}, write $w \in W_G$, if it satisfies \emph{weight function conditions} (C1)--(C3):
\begin{itemize}
\item[(C1)]  $w$ is \emph{local}, i.e. if $D'$ is obtained from $D$ by a generalized Reidemeister move such that a crossing $c \in D$ is not involved in this move and $c' \in D'$ is the corresponding crossing, then $w(c') = w(c)$;
   
\item[(C2)] if diagram $D'$ is obtained from $D$ by RIII-move and involved classical crossings $a, b, c \in D$ have weights $w(a)$, $w(b)$ and $w(c)$, as well as involved crossings of $a', b', c' \in D'$ have weights $w(a')$, $w(b')$ and $w(c')$, see Figure~\ref{fig6}, then $w(a')= w(a)$, $w(b') = w(b)$ and $w(c') =w(c)$.
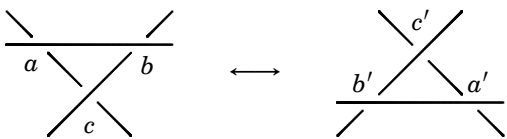
\begin{figure}[!ht]
	\centering
	\unitlength=0.55mm
	\begin{picture}(0,35)(0,5)
		\thicklines
		\qbezier(-50,10)(-50,10)(-30,30)
		\qbezier(-50,30)(-50,30)(-42,22)
		\qbezier(-30,10)(-30,10)(-38,18)
		\qbezier(-60,32)(-60,32)(-20,32)
		\qbezier(-54,34)(-54,34)(-60,40)
		\qbezier(-26,34)(-26,34)(-20,40)
		\put(-54,27){\makebox(0,0)[cc]{\footnotesize $a$}}
		\put(-26,27){\makebox(0,0)[cc]{\footnotesize $b$}}
		\put(-40,12){\makebox(0,0)[cc]{\footnotesize $c$}}
		\put(0,25){\makebox(0,0)[cc]{$\longleftrightarrow$}}
		\qbezier(50,40)(50,40)(30,20)
		\qbezier(50,20)(50,20)(42,28)
		\qbezier(30,40)(30,40)(38,32)
		\qbezier(20,18)(20,18)(60,18)
		\qbezier(20,10)(20,10)(26,16)
		\qbezier(60,10)(60,10)(54,16)
		\put(54,23){\makebox(0,0)[cc]{\footnotesize $a'$}}
		\put(26,23){\makebox(0,0)[cc]{\footnotesize $b'$}}
		\put(40,38){\makebox(0,0)[cc]{\footnotesize $c'$}}
	\end{picture}
	\caption{RIII move.} \label{fig6}
\end{figure}
\item[(C3)] if diagram $D'$ is obtained from $D$ by SV-move and involved classical crossing $c \in D$ has weight $w(c)$, as well as involved classical crossing $c' \in D'$ has weight $w'(c')$, see  Figure~\ref{fig7}, then $w'(c') = w(c)$.
\begin{figure}[!ht]
	\centering
	\unitlength=0.55mm
	\begin{picture}(0,45)(0,0)
		\thicklines
		\qbezier(-50,10)(-50,10)(-42,18)
		\qbezier(-20,40)(-20,40)(-38,22)
		\qbezier(-60,40)(-60,40)(-30,10)
		\qbezier(-60,30)(-60,30)(-20,30)
		\put(-50,30){\circle{4}}
		\put(-30,30){\circle{4}}
		\put(-40,12){\makebox(0,0)[cc]{\footnotesize $c$}}
		\put(0,25){\makebox(0,0)[cc]{$\longleftrightarrow$}}
		\qbezier(20,20)(20,20)(60,20)
		\qbezier(30,40)(30,40)(60,10)
		\qbezier(20,10)(20,10)(38,28)
		\qbezier(50,40)(50,40)(42,32)
		\put(30,20){\circle{4}}
		\put(50,20){\circle{4}}
		\put(40,38){\makebox(0,0)[cc]{\footnotesize $c'$}}
	\end{picture}
	\caption{SV move.} \label{fig7}
\end{figure}
\end{itemize}
\end{definition}

Definition~\ref{def1} may be thought of as a generalization of the Chord Index axioms in~\cite{C16}.

\begin{definition} \label{def2}  \rm 
Let $w : C(\mathcal D) \to G$ be a weight function. Assume that  diagram $D'$ is obtained from $D$ by RII-move and $\alpha$, $\beta$ are crossings involved. 
If  $w(\beta) = - w(\alpha)$,  then we say  that \emph{$w$ is an odd weight function} and write $w \in W^{odd}_G$. If $w(\beta)  = w(\alpha)$, then  we say that \emph{$w$ is an even weight function} and write $w \in W^{even}_G$.
\end{definition}

\begin{example}  \label{example1} \rm 
Let $\alpha$ and $\beta$ be classical crossings involved in RII move as in  Figure~\ref{fig8}. 
\begin{figure}[!ht]
\centering
\unitlength=0.4mm
	\centering
	\includegraphics[scale=0.6]{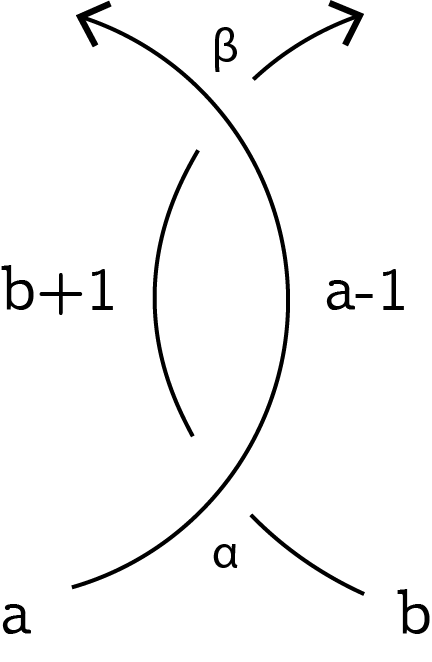}
	\caption{RII move involving crossings $\alpha$ and $\beta$.} \label{fig8}
\end{figure}
Consider two functions $w_1, w_2 : C(D) \to \mathbb Z$, where $w_{1} (c) = \operatorname{sgn} (c)$, is the sign of crossing, defined in Figure~\ref{fig3}, and  $w_{2} (c) =  \operatorname{Ind} (c)$, is the index of crossing, defined by (\ref{eqn1}). Both of them are weight functions. Since  $w_{1} (\alpha) = 1$ and $w_{1}(\beta) = -1$, we get  $w_{1} (\beta) =-  w_{1} (\alpha)$ and hence $w_1 \in W^{odd}_{\mathbb Z}$. Since  $w_{2} (\alpha) = \operatorname{sgn} (\alpha) \cdot (a-b-1) = a-b-1$ and $w_{2} (\beta) = \operatorname{sgn} (\beta) \cdot  (b+1 - (a-1) -1) = a - b -1$, we get  $w_{2} (\beta) = w_{2} (\alpha)$ and hence $w_2 \in W^{even}_{\mathbb Z}$.
\end{example}

\smallskip
For two weight function $u, v : C(D) \to G$, where $G$ is an abelian group, we can define a sum and a product (if the codomain $G$ is a ring) as follows:
\begin{equation}
(u + v)(c) =u(c) + v(c) \qquad \text{and} \qquad  (u*v) (c) = u(c)v(c). \label{eqn3}
\end{equation} 
Therefore $W_G$ is an abelian group with $W_G^{odd}$ and $W_G^{even}$ as subgroups. The set $W_{\mathbb Z}$ with operations (\ref{eqn3}) forms a ring, and it may be convenient to regard $W_G$ as a $W_{\mathbb Z}$-module with the module multiplication denoted by the same symbol "$*$". 

\smallskip 
For any $w \in W_{G_1}$ and any homomorphism $\phi: G_1 \to G_2$ of abelian groups the composition $w' = \phi \circ w$ is a weight  $w' \in  W_{G_2}$.

\smallskip 
We also admit cases when weight functions may be not defined for some crossings of a diagram. In these cases we will use the following approach.  

\begin{definition}  \label{def3} \rm 
A subset $C'(\mathcal{D}) \subset C(\mathcal{D})$ is said to be \emph{consistent} if the characteristic function
$1_{C'(\mathcal{D})}: C(\mathcal{D}) \to \{0, 1\} \subset \mathbb{Z}$ of the set $C'(\mathcal{D})$ is an even weight function.	
\end{definition} 

\begin{example} \label{example2} \rm 
For positive integer $i$ and $j$ consider the regular set $\mathcal DL_n$ of all diagrams of ordered oriented virtual links with at least $n$-components, where $n = \max\{i, j\}$.  Let  $C_i(\mathcal DL_n)$  be the set containing crossings only of the $i$-th component, and let  $C_{ij}(\mathcal DL_n)$ be the set containing only  crossings, that belong to both $i$-th and $j$-th components. The characteristic functions of sets $C_i(\mathcal DL_n)$ and $C_{ij}(\mathcal DL_n)$ are even weight functions.
\end{example}
 
Weight functions for consistent subsets of $C(\mathcal D)$ can be defined in the following way.

\begin{definition} \label{def4} \rm 
Let $C'(\mathcal D) \subset C(\mathcal D)$ be  consistent. Then $w' : C'(\mathcal D) \to G$ is said to be \emph{a weight function defined for $C'(\mathcal D)$} if $w'$ satisfies weight function conditions (C1)~-~(C3) for all crossings in $C'(\mathcal D)$.
\end{definition}

\begin{remark}\label{remark1} \rm 
If $C'(\mathcal D) \subset C(\mathcal D)$ is consistent, and $w' : C'(\mathcal D) \to G$ is a weight function, then $w'$ can be extended  to $w : C(\mathcal D) \to G$ by defining 
$$
w(c) = \begin{cases}
	w'(c), & c \in C'(\mathcal D), \\
	0, & \text{otherwise}.
	\end{cases}
$$
\end{remark}

\section{I-functions} \label{sec4}

Let $C'(\mathcal D)$ and $C''(\mathcal D)$ be consistent subsets of $C(\mathcal D)$.
Suppose there are weight functions $v: C'(\mathcal D) \to G_1 \text{ and } w : C''(\mathcal D) \to G_2$ are such that $v \in W^{odd}_{G_1}$  and {$w \in W^{even}_{G_2}$}. We can assume, that $C''(\mathcal D)$ is a subset of $C'(\mathcal{D})$. Otherwise we can replace $v$ by its extension on $C(\mathcal{D})$ as in Remark~\ref{remark1} and take $C'(\mathcal D) = C(\mathcal D)$.

Let $\mathcal D(L) \subset \mathcal{D}$ be the set of all regular diagrams of an ordered oriented virtual link $L$, and $w : C'(\mathcal D) \to G$ be a weight function. For a diagram $D \in \mathcal D(L)$ denote by $R(w, D)$ the set of weights $w(c)$, where $c$ is a classical crossing in $D$ that may be reduced by a single RI-move.  Then take a union over all diagrams of $L$: $$R(w,L) = \bigcup\limits_{D \in \mathcal D(L)} R(w,D).$$

For  some weight functions the set $R(w,L)$ can be easy described as in Example~\ref{example3}. 

\begin{example}  \label{example3} \rm
(i) Consider the weight function $w(c) = \operatorname{Ind} (c)$. If $c \in C(D)$ can be reduced by  RI-move, then by Figure~\ref{fig4}, the labelling around $c$ is such that  $a = b-1$, hence by (\ref{eqn1}), $\operatorname{Ind} (c) = 0$. Therefore, for any oriented virtual knot $K$ we get $R(\operatorname{Ind}, K) = \{ 0 \}$.\\
(ii) Consider the weight function $w(c) = \nabla J_n(D_c)$. As above, if  $c \in C(D)$ can be reduced by RI-move, then $\operatorname{Ind} (c) = 0$, whence either $\nabla J_n(D_c) = \nabla J_n(D)$ or $\nabla J_n(D_c) = -\nabla J_n(D)$.  Therefore, for any oriented virtual knot $K$ we get $R(w, K) = \{ \pm \nabla J_n(K) \}$.
\end{example}

\begin{definition} \label{def5}  \rm 
Let $D$ be a diagram of an ordered oriented virtual link $L$ and in above notations $g \in G_2$ be such that $g \not\in R(w, L)$ or $R(v, L) = \{0\}$. Define  $I$-function $I : C'( D) \to G_1$ by 
\begin{equation} 
I(D; v, w, g) = \sum_{w(c) = g } v(c). \label{eqn4}
\end{equation} 
\end{definition}

Hereafter we will require that all our weight functions are local with respect to crossing change operation. Then to every weight function $w : C'(\mathcal{D}) \to G$ one can associate a weight function $w^{*} : C'(\mathcal{D}) \to G$
induced by taking a mirror image, i.e. $w^{*} (c) = w (c^{*})$.

\begin{definition} \label{def6} \rm 
Let $D$ be a diagram of an ordered oriented virtual link $L$ and in above notations $g \in G_2$ is such that $g \not\in R(w, L)$ or $R(v, L) = \{0\}$. Define a  flat $I$-function $I _f: C'(D) \to G_1$ by 
\begin{equation}
I_f(D; v, w, g) = \sum_{w(c) = g} v(c) + \sum_{ w^*(c) = g}  v^{*}(c). \label{eqn5} 
\end{equation}
\end{definition}

\begin{theorem} \label{theorem1} In above notations,
	\begin{enumerate}
		\item[(i)] $I(D; v, w, g)$ is an ordered oriented virtual link invariant,
		\item[(ii)] $I_f(D; v, w, g) $ is an ordered oriented  flat virtual link invariant.
	\end{enumerate}
\end{theorem}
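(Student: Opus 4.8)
The plan is to check invariance directly against every generalized Reidemeister move, matching each weight-function axiom to the move it governs. For part (i) I would first dispose of the moves that do not create or destroy classical crossings. The purely virtual moves VR1--VR3 leave $C(D)$ and all weights untouched by locality (C1), so $\sum_{w(c)=g}v(c)$ is unchanged. The mixed move SV sends a single classical crossing $c$ to $c'$ with $w(c')=w(c)$ and $v(c')=v(c)$ by (C3), and an RIII move sends $a,b,c$ to $a',b',c'$ with all weights preserved by (C2); in both cases (C1) preserves the remaining crossings, so every summand, and hence the sum, is preserved. This isolates the two moves that change the crossing set, RI and RII.

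For an RII move eliminating the pair $\alpha,\beta$, consistency of $C''(\mathcal D)$ (Definition~\ref{def3}) places $\alpha$ and $\beta$ in the domain together, $w\in W^{even}_{G_2}$ gives $w(\alpha)=w(\beta)$, and $v\in W^{odd}_{G_1}$ gives $v(\beta)=-v(\alpha)$; thus the pair contributes either $v(\alpha)+v(\beta)=0$ (when $w(\alpha)=g$) or nothing (when $w(\alpha)\neq g$), leaving the sum fixed. For an RI move eliminating a kink crossing $c$ I would invoke the standing hypothesis on $g$: either $g\notin R(w,L)$, whence $w(c)\in R(w,L)$ forces $w(c)\neq g$ and $c$ never enters the sum, or $R(v,L)=\{0\}$, whence $v(c)=0$ and $c$ contributes zero. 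Together these establish (i).

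For part (ii) I would begin from the identity $I_f(D;v,w,g)=I(D;v,w,g)+I(D^{*};v,w,g)$, which follows by reindexing the second sum via $c\mapsto c^{*}$ and using $w^{*}(c)=w(c^{*})$, $v^{*}(c)=v(c^{*})$. Because any generalized Reidemeister move on $D$ mirrors to a move of the same type on $D^{*}$, invariance under all such moves is immediate from (i) applied to $D$ and to $D^{*}$ (noting $v^{*},w^{*}$ inherit the odd and even properties, and that the $g$-hypothesis is taken over the whole flat class, which already contains the mirror diagrams). The genuinely new point is invariance under a single crossing change $c_0\mapsto c_0^{*}$. Using locality with respect to crossing change (assumed just before Definition~\ref{def6}), only the data at $c_0$ is altered, interchanging $(w(c_0),v(c_0))$ with $(w^{*}(c_0),v^{*}(c_0))$: the summand $v(c_0)$, present in the first sum exactly when $w(c_0)=g$, leaves it and reappears in the second sum under the same condition, while $v^{*}(c_0)$ migrates from the second sum to the first; the two transfers cancel and $I_f$ is unchanged.

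The step I expect to be the main obstacle is precisely this crossing-change invariance: it is the only place forcing the symmetric shape of $I_f$ (the $v$-sum balanced against the $v^{*}$-sum), and it rests on the assumption that the weights are local under crossing changes, so that flipping $c_0$ does not perturb $w(c)$ or $v(c)$ for $c\neq c_0$. I would therefore state that locality hypothesis explicitly and confirm it for the concrete weights in play, such as the index and the $d$-writhe $\nabla J_n(D_c)$, since the whole swapping argument collapses without it. The surrounding bookkeeping---tracking membership in the consistent domains $C'(\mathcal D)$ and $C''(\mathcal D)$ through each move---is routine once consistency is invoked.
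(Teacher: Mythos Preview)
Your proof is correct and follows essentially the same approach as the paper: part~(i) is argued move by move exactly as the paper does, and part~(ii) is handled by writing $I_f$ as a sum of two $I$-functions and then checking crossing-change invariance via the swap $(w(c_0),v(c_0))\leftrightarrow(w^*(c_0),v^*(c_0))$. The only cosmetic difference is that the paper spells out the crossing-change step as a four-case comparison of the $c_0$-contributions $S$ and $S'$, whereas you present it as a single symmetric transfer; the underlying identity $w(c_0')=w^*(c_0)$, $v(c_0')=v^*(c_0)$ (derived from locality under crossing change) is the same in both.
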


\begin{proof}
	(i) Since $I(D; v, w, g)$ is a sum over classical crossings, it is invariant under moves VRI, VRII and VRIII which involve  virtual crossings only.  Moreover, it is invariant under RI-move since the assumption $g \not\in R(w, L)$ or  $R(v, L) = \{0\}$ implies that any crossing involved in RI-move does not participate in the sum.  The invariance under RII-move follows from the assumption $v \in W^{odd}$. Indeed, if two crossings $c_{1}$ and $c_{2}$ in the sum are involved in RII-move, then $v(c_{1}) = - v (c_{2})$.  Invariance under moves RIII and SV  follows from assumptions  (C2) and (C3) of Definition~\ref{def1}.
	
	(ii) Note that $v \in W^{odd}_{G_1}$ implies $v^{*} \in W^{odd}_{G_1}$, and $w \in W^{even}_{G_2}$ implies $w^{*} \in W^{even}_{G_2}$. Therefore, by (i),  $I(D; v^{*}, w^{*},  g)$ is also a virtual link invariant, and hence $I_{f}(D; v, w,  g)$ is a virtual link invariant being a sum of invariants.
	
	To prove that $I_{f} (D; v, w, g)$  is an invariant of flat virtual links, we need to show, that it is invariant under crossing change. Let $D$ be a diagram of $L$ and $c_{0} \in C(D)$. Let $D'$ be a diagram obtained from $D$ by a crossing change in $c_0$, and $c_0'$ be a crossing in $D'$  corresponding to $c_0$. Then we can write
\begin{eqnarray} 
	I_{f}(D; v, w, g) = \sum_{w(c) = g, \, c \neq c_0} v(c) + \sum_{w^*(c) = g, \, c \neq c_0}  v^{*}(c) + S, \label{eqn6}
\end{eqnarray}
	where
	$$
	S = \begin{cases}
	0, & \text{if} \quad w(c_0) \neq g \text{ and } w^*(c_0) \neq g,\\
	v(c_0), & \text{if} \quad w(c_0) = g \text{ and } w^*(c_0) \neq g,\\
	v^*(c_0), & \text{if} \quad w(c_0) \neq g \text{ and }w^*(c_0) = g,\\
	v(c_0) + v^*(c_0), & \text{if} \quad w(c_0) = w^*(c_0) = g.
	\end{cases}
	$$
	Analogously,
\begin{equation}
	I_{f}(D'; v, w,  g) = \sum_{w(c') = g, \,  c' \neq c_0'} v(c') + \sum_{ w^*(c') = g, \, c' \neq c_0' }  v^{*}(c') + S', \label{eqn7}
\end{equation}
	where
	$$
	S' = \begin{cases}
	0, & \text{if} \quad  w(c_0') \neq g \text{ and } w^*(c_0') \neq g,\\
	v(c_0'), & \text{if} \quad  w(c_0') = g \text{ and } w^*(c_0') \neq g,\\
	v^*(c_0'), & \text{if} \quad  w(c_0') \neq g \text{ and }w^*(c_0') = g,\\
	v(c_0') + v^*(c_0'), & \text{if} \quad  w(c_0') = w^*(c_0') = g.
	\end{cases}
	$$
	Since weight functions $v$ and $w$ are local by (C1) of Definition~\ref{def1}, then after crossing change  in $c_0$, in expressions   (\ref{eqn6}) and (\ref{eqn7}) only $S$ and $S'$ may differ. Now let us change all crossings in $D'$ except $c_0'$. Then we obtain a mirror image $D^{*}$ of a diagram $D$. Denote by $c^{*}_{0}$ the crossing in $D^{*}$ which corresponds to $c_{0}$  Hence, $v(c_0') =v(c_0^*)$ and $w(c_0') =w(c_0^*)$. Since by definition $v^*(c) = v(c^*)$ and $w^*(c) = w(c^*)$, we conclude $v(c_0') = v^*(c_0)$ and $w(c_0') = w^*(c_0)$. Similarly we acquire $v^*(c_0') = v^*(c_0^*) = v(c_0)$ and $w^*(c_0') = w^*(c_0^*) = w(c_0)$. Applying those equalities to $S'$ we get imply $S' = S$. Therefore, (\ref{eqn7}) is equal to (\ref{eqn6}), hence $I_f$ is a flat virtual link invariant. 
\end{proof}

By Theorem~\ref{theorem1} we can use notations $I(L; v, w,  g)$ and $I_{f}(L; v, w, g)$ instead of $I(D; v, w, g)$ and $I_{f}(D; v, w,  g)$, where $D$ is a diagram of a virtual link $L$.

\begin{example} \label{example4} \rm 
Consider weight functions $v(c) = \sgn(c) \in W^{odd}_{\mathbb Z}$ and $w(c) = \ind(c) \in W^{even}_{\mathbb Z}$. Then
\begin{equation}
I(D; v, w, n) = \sum\limits_{w (c) = n} v(c) = \sum\limits_{\ind(c) = n} \sgn(c )= J_n(D) \label{eqn8}
\end{equation} 
is the defined above the $n$-th writhe number and
\begin{eqnarray}
I_{f}(D; v, w, n) & = & \sum\limits_{w (c) = n} v(c) + \sum\limits_{w^*(c) = n} v^*(c) \nonumber \\ 
& = & J_n(D) +  \sum\limits_{\ind(c) = -n} (-\sgn(c)) \nonumber \\
& = & J_k(D) - J_{-k}(D) \nonumber \\ 
& = & \nabla J_k(D), \label{eqn9} 
\end{eqnarray}
that is  the $n$-th difference writhe number defined by (\ref{eqn2}). 
\end{example}

\section{Smoothings in classical crossings and invariants} \label{sec5}

Above we used the type-1 smoothing, presented in Figure~\ref{fig5}, to construct $F$-polyno\-mials. Applying the type-1 smoothing to a classical crossing $c \in D$, which belongs to a single connected component, we obtain a link diagram with one less classical crossing and the same number of components.

Let us consider another types of smoothings of virtual link diagrams in classical crossings as below. 
The \emph{type-2 smoothing} is presented in Figure~\ref{fig9}.  Consider a diagram of an $n$-component ordered oriented virtual link and assume that in classical crossing $c$ two meeting arcs belong to the same say, $i$-th, component. After the presented smoothing we will obtain a diagram of an ordered $(n+1)$-component link. The smoothing induces the order change: one arc will preserve orientation and the corresponding component will be the $i$-th, but another arc will get reverse orientation and the corresponding component will be the $(n+1)$-th. 
\begin{figure}[ht]
	\centering
	\includegraphics[scale=0.6]{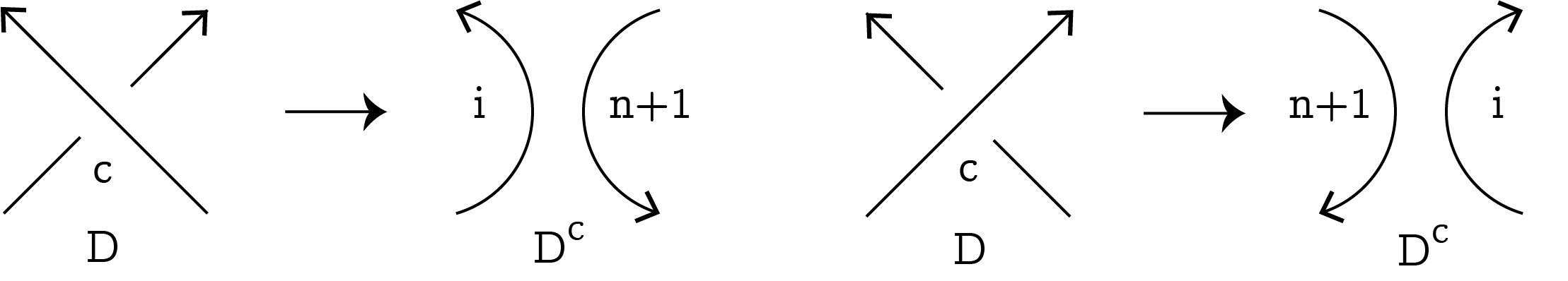}
	\caption{Type-2 smoothing.}.  \label{fig9}
\end{figure}

The \emph{type-3 smoothing} is presented in Figure~\ref{fig10}.  Consider a diagram of 2-component ordered oriented virtual link, and assume that crossing $c$ belong to both components. Then after smoothing we will get a diagram of knot. Note that type-3 smoothing can be generalized to $n$-component ordered oriented virtual links, and we obtain an $(n-1)$-component link as a result.  
\begin{figure}[ht]
	\centering
	\includegraphics[scale=0.6]{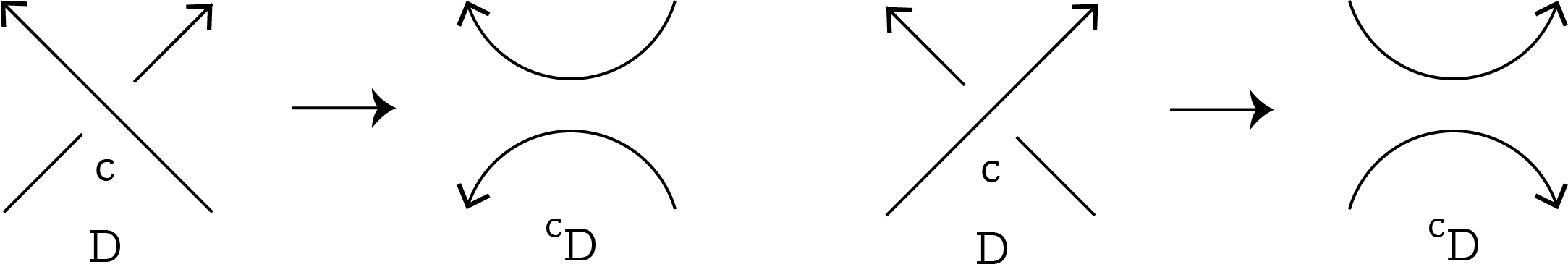}
	\caption{Type-3 smoothing.}
	\label{fig10}
\end{figure}

Further for a classical crossing $c\in D$ we denote by  $D_c$, $D^c$ and $\prescript{c}{}{D}$ a diagram obtained from $D$ by smoothing of type-1, type-2 and type-3, respectively.

\smallskip 
Let us denote by $\VL$ a free $\mathbb Z$-module generated by ordered oriented flat virtual links.
For a virtual link diagram $D$ denote by $[D]$ a flat virtual link whose diagram is obtained from $D$ by replacing all classical crossings by flat crossings. Then $[D] \in \VL$. 
	
\begin{theorem} \label{theorem2}
Functions $S_i : C(\mathcal{D}) \to \VL$, for $i=1,2,3$, defined by  
$$
S_1(c) = [D_c],  \qquad S_2(c) = [D^c], \qquad \text{and} \quad S_3(c) = [\prescript{c}{}{D}], 
$$
are even weight functions. Moreover, if crossing $c \in D$ can be reduced by RI-move, then
\begin{itemize}
\item  $[D_c]$ is equivalent to either $[D]$ or $[D']$, where $D'$ is obtained from $D$ by reversing orientation on the component, containing $c$; 
\item $[D^c]$ is equivalent to $[D]$ with one unknot added, where ordering and orientation of components are induced by type-2 smoothing;  
\item type-3 smoothing can not be applied at crossing $c$. 
\end{itemize}
\end{theorem}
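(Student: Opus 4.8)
The plan is to check, for each $i \in \{1,2,3\}$, the three weight function conditions (C1)--(C3) of Definition~\ref{def1} together with the even condition of Definition~\ref{def2}, and then to treat the RI-reducible case by a separate local inspection. The organizing principle is that a smoothing at a crossing $c$ is a purely local operation, supported in a small disc around $c$, so it interacts with a generalized Reidemeister move in only one of two ways: either the move is supported on a disc disjoint from $c$, in which case the move and the smoothing commute, or $c$ is one of the crossings taking part in the move, in which case smoothing $c$ degrades the move into a strictly simpler one. Since $\VL$ is freely generated by \emph{equivalence classes} of flat virtual links, it suffices in each case to exhibit an equivalence between the two smoothed diagrams, as flat virtual links, to conclude that the corresponding values of $S_i$ coincide as generators.

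Condition (C1) and condition (C3) both fall to the disjoint-support reasoning. For (C1), if $D'$ arises from $D$ by a move not involving $c$ and $c'$ is the corresponding crossing, then the smoothing and the move act on disjoint discs, so the smoothed diagrams of $D$ and $D'$ differ by exactly that same move and hence are flat-equivalent; this argument is identical for the three smoothing types. For (C3), when $c$ is the classical crossing of an SV-move (Figure~\ref{fig7}), smoothing it deletes that crossing and leaves only virtual crossings in the affected region, so the two smoothed diagrams differ by a detour of virtual crossings and are again flat-equivalent.

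The substance of the proof lies in the even condition and in (C2). For the even condition, I would take the two crossings $\alpha, \beta$ of an RII bigon and observe that smoothing one of them opens the bigon and converts the other into an RI-reducible kink; carrying this out for $\alpha$ and for $\beta$ yields two diagrams that become flat-equivalent after a single RI-move, so $S_i(\alpha) = S_i(\beta)$ and each $S_i$ is even. For (C2), I would run through the three crossings $a, b, c$ of the RIII configuration of Figure~\ref{fig6} one at a time, and verify that smoothing a given crossing turns the RIII-move into an RII-move relating the two smoothed diagrams; as RII-equivalent diagrams are flat-equivalent, this yields $S_i(a') = S_i(a)$, and likewise for $b$ and $c$. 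I expect (C2) to be the main obstacle, because it requires a case-by-case verification of the local pictures for each of the three smoothing types, together with careful bookkeeping of the orientation reversals introduced by the type-1 and type-2 smoothings, to confirm that the two residual crossings always form a removable bigon with compatible orientations; the same orientation bookkeeping is the one delicate point in the even condition as well. For $S_3$ one restricts throughout to the consistent subset of crossings lying on two distinct components, on which all of these local arguments are unaffected.

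For the RI-reduction statements I would argue directly from the local model of a monogon kink. The type-1 smoothing reconnects the two strands at $c$, and according to the sign of the kink this either restores $[D]$ or produces $[D']$, the latter being $[D]$ with the orientation reversed on the component carrying $c$. The type-2 smoothing instead detaches the monogon loop as an additional trivial component, so $[D^c]$ equals $[D]$ with one disjoint unknot adjoined, ordered and oriented as prescribed in Figure~\ref{fig9}. Finally, the type-3 smoothing requires $c$ to lie on two distinct components, which is impossible at a self-crossing such as an RI-reducible one, so it cannot be applied at $c$, which is the last assertion.
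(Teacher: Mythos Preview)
Your overall plan---checking (C1)--(C3) and the even condition by a local case analysis for each smoothing type, then handling the RI-reducible crossing separately---is exactly the paper's approach; the paper carries out $S_2$ in detail, cites \cite{KPV18} for $S_1$, and declares $S_3$ analogous. Your arguments for (C1), (C3), and the three RI-reducible claims are correct and match the paper.

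The gap is in your proposed mechanism for the even condition and for (C2). You assert that smoothing one crossing of an RII bigon always leaves the other as an RI-reducible kink, and that smoothing one crossing in an RIII configuration always leaves the other two as an RII-removable pair. For the type-2 smoothing this fails in certain orientation cases. In the paper's check of RII for $S_2$, one of the two orientation cases gives smoothed diagrams $D^{c_1}$ and $D^{c_2}$ that are related only by a single crossing change (the surviving crossing lies between the two \emph{new} components created by the smoothing, so it cannot be a self-crossing kink); in the RIII check, the crossing between the two outer strands yields before/after smoothed diagrams that are again only flat equivalent, not related by an RII-move. These cases still go through precisely because the target is $\VL$ and a crossing change is a flat equivalence, but the reduction you describe does not produce them. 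When you carry out the detailed case check you anticipate, be prepared---for type-2 at least---for the residual local picture sometimes to be a crossing (or pair of crossings) that cancels only as a flat crossing, not via a classical Reidemeister move.
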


\begin{proof}
The proof is a straightforward check of Reidemeister moves with all possible orientations. For $S_1$ it was done in~\cite[Theorem~3.3]{KPV18}. Now we give a proof for $S_2$. 

\underline{RI-move}: Let $D$ be a diagram of an ordered oriented $n$-component virtual link. Consider crossing $c \in D$ that can be reduced by RI-move on $i$-th component, see Figure~\ref{fig11}. After type-2 smoothing we get a new component which is an unknot. If after  smoothing the $i$-th component remains $i$-th, then its orientation is preserved. If after smoothing the $i$-th component becomes $(n+1)$-th component, then its orientation is reversed. 
\begin{figure}[ht]
\centering
\includegraphics[scale=0.55]{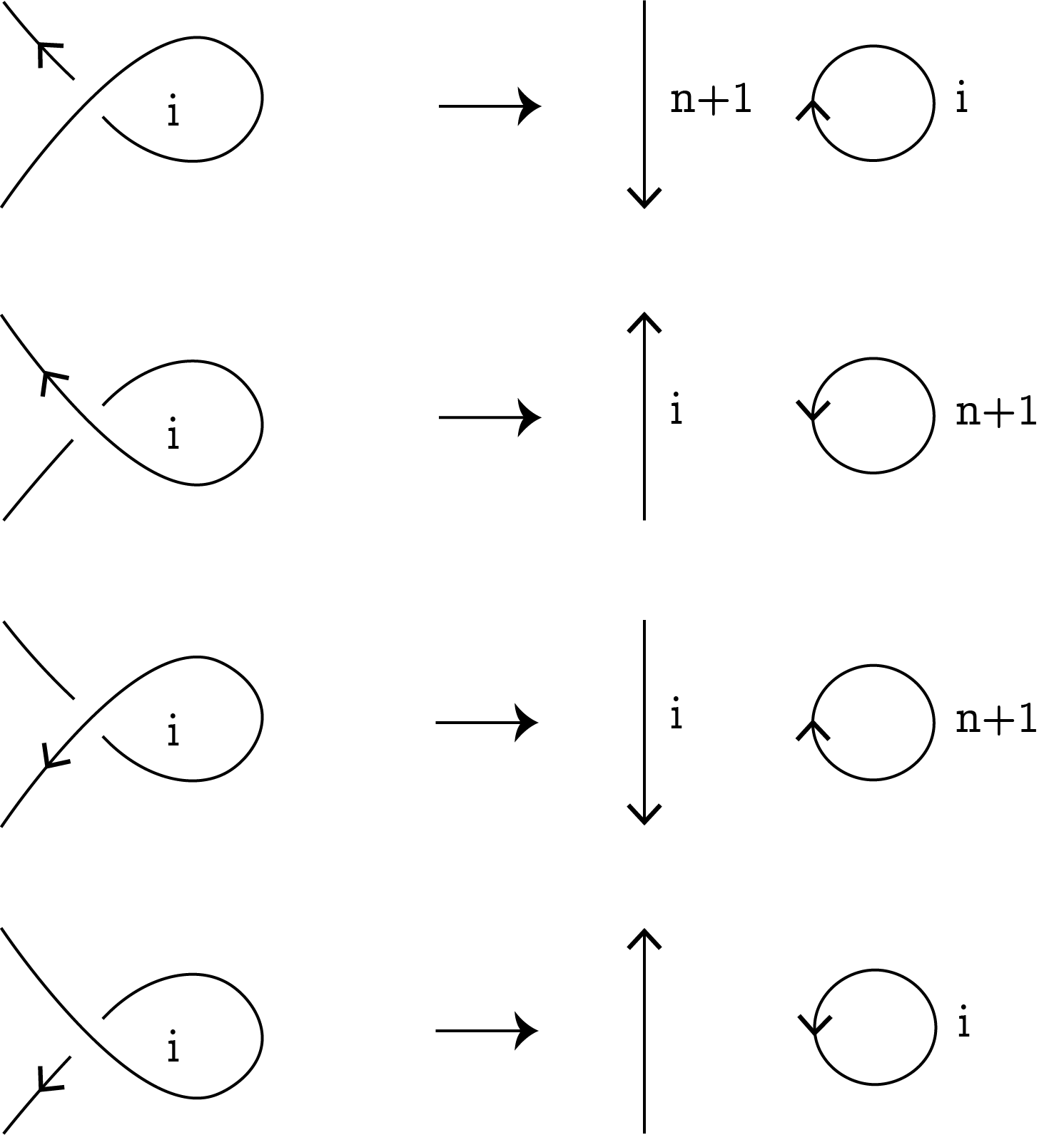}
\caption{All possible RI-moves and corresponding smoothings.} \label{fig11}
\end{figure}

\underline{RII-move}: Let $D$ be a diagram of an ordered oriented $n$-component virtual link. Consider crossings $c_1, c_2 \in D$ that belong to $i$-th component and can be reduced by RII-move. Depending on orientation, there are two cases, presented in Figure~\ref{fig12} and~\ref{fig13}. 
For each case we have two possibilities of type-2 smoothing: since $i$-th component splits into two components, one of them will be either $i$-th  or $(n+1)$-th, and, respectively, another will be $(n+1)$-th or $i$-th.  In the case, presented in Figure~\ref{fig12}, two diagrams, obtained by smoothings, are equivalent under one crossing change, so they are equivalent as flat diagrams. In the case, presented in Figure~\ref{fig13}, two diagrams, obtained by smoothings, are equivalents under RI-moves, so they are equivalent. 
\begin{figure}[ht]
\centering
\includegraphics[scale=0.5]{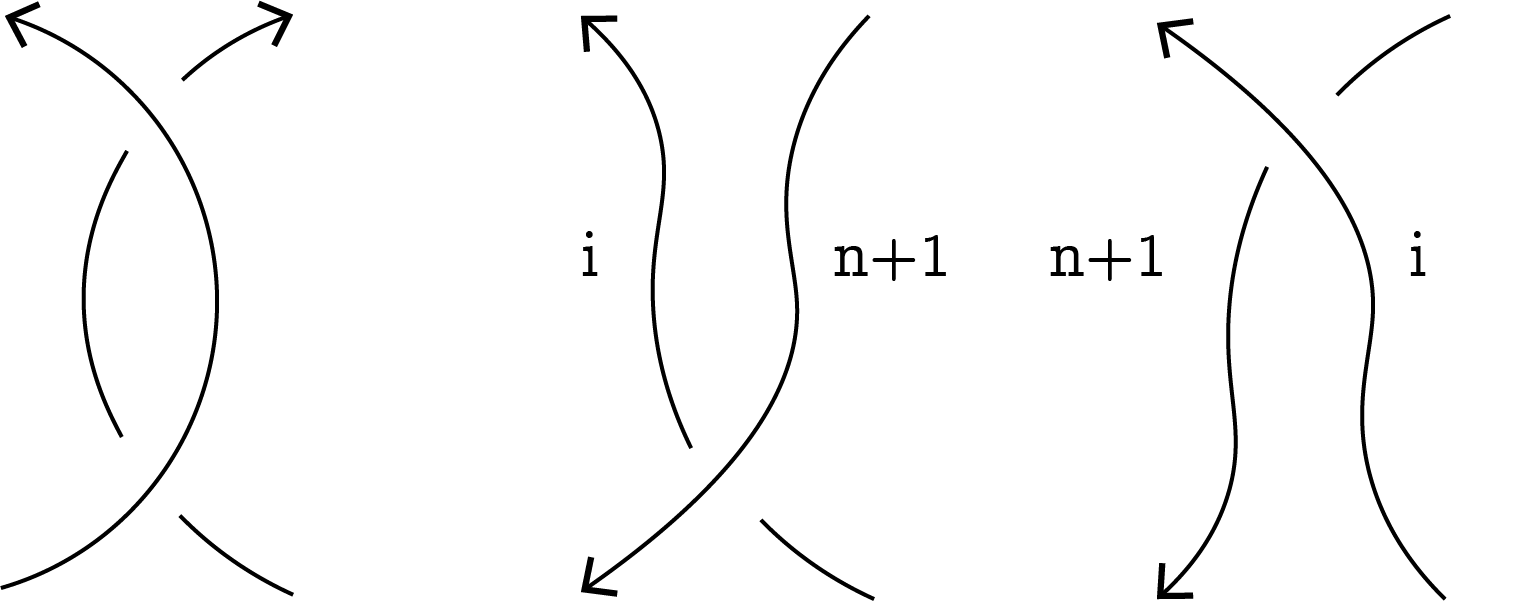}
\caption{1-st case of RII-move and corresponding smoothings.}
\label{fig12}
\end{figure}
\begin{figure}[ht]
\centering
\includegraphics[scale=0.5]{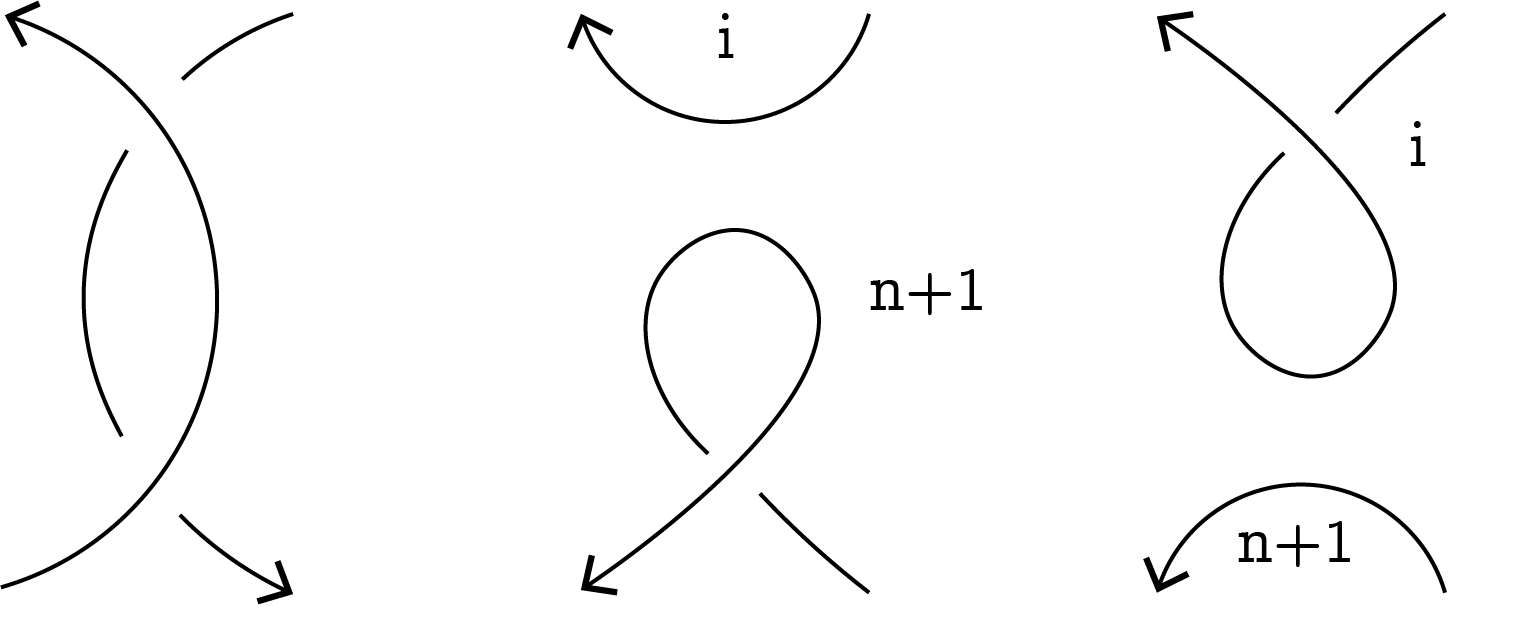}
\caption{2-nd case of RII-move and corresponding smoothings.}
\label{fig13}
\end{figure}

\underline{RIII-move}: Consider RIII-move presented in Figure~\ref{fig14} with components numerated by $i$, $j$ and $k$, where some of these numbers may coincide.  
\begin{figure}[ht]
\centering
\includegraphics[scale=0.5]{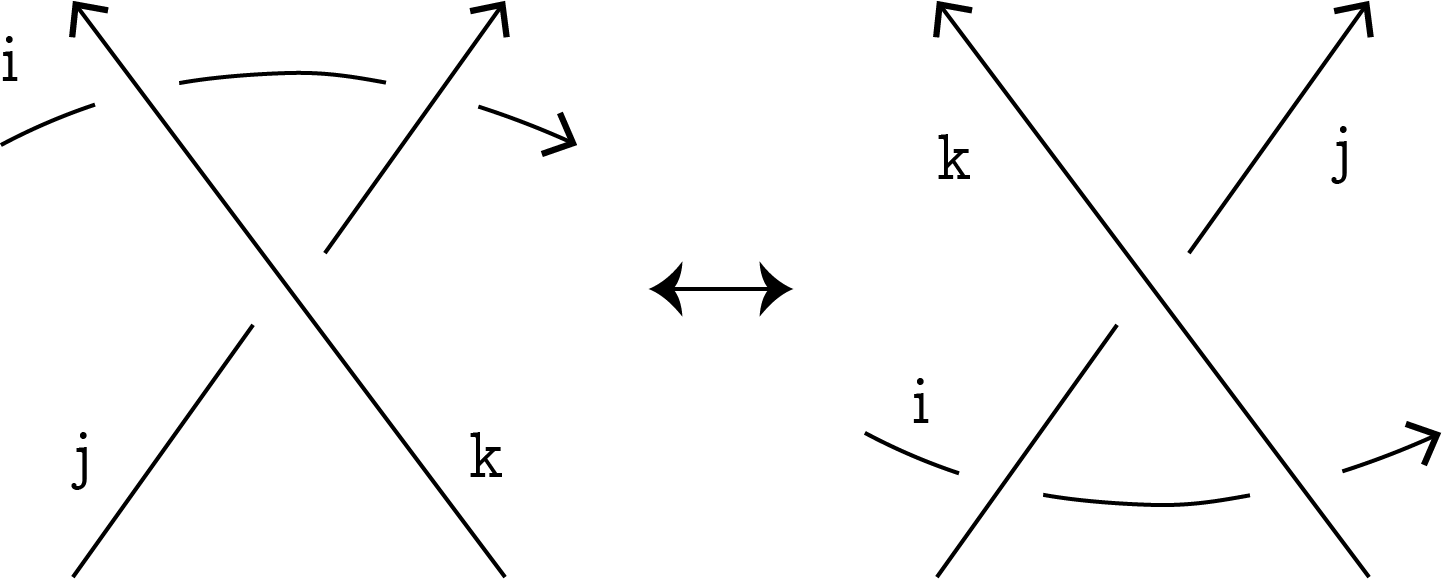}
\caption{RIII-move.} \label{fig14}
\end{figure}
Let us apply type-2 smoothings in three crossings of the initial (before RIII-move) diagram and in three crossings of the terminal (after RIII-move) diagram. Thus, we get three smoothed diagrams of virtual links on the left side in Figure~\ref{fig15} and three smoothed diagrams of virtual links on the right side in the same figure.   It is clear from Figure~\ref{fig15} that smoothed diagrams for corresponding crossing are flat equivalent (for crossing of components $i$ and $k$ ) or  equivalent (for crossing of components $i$ and $j$ and crossing of components $j$ and $k$). 
\begin{figure}[ht]
\centering
\includegraphics[scale=0.5]{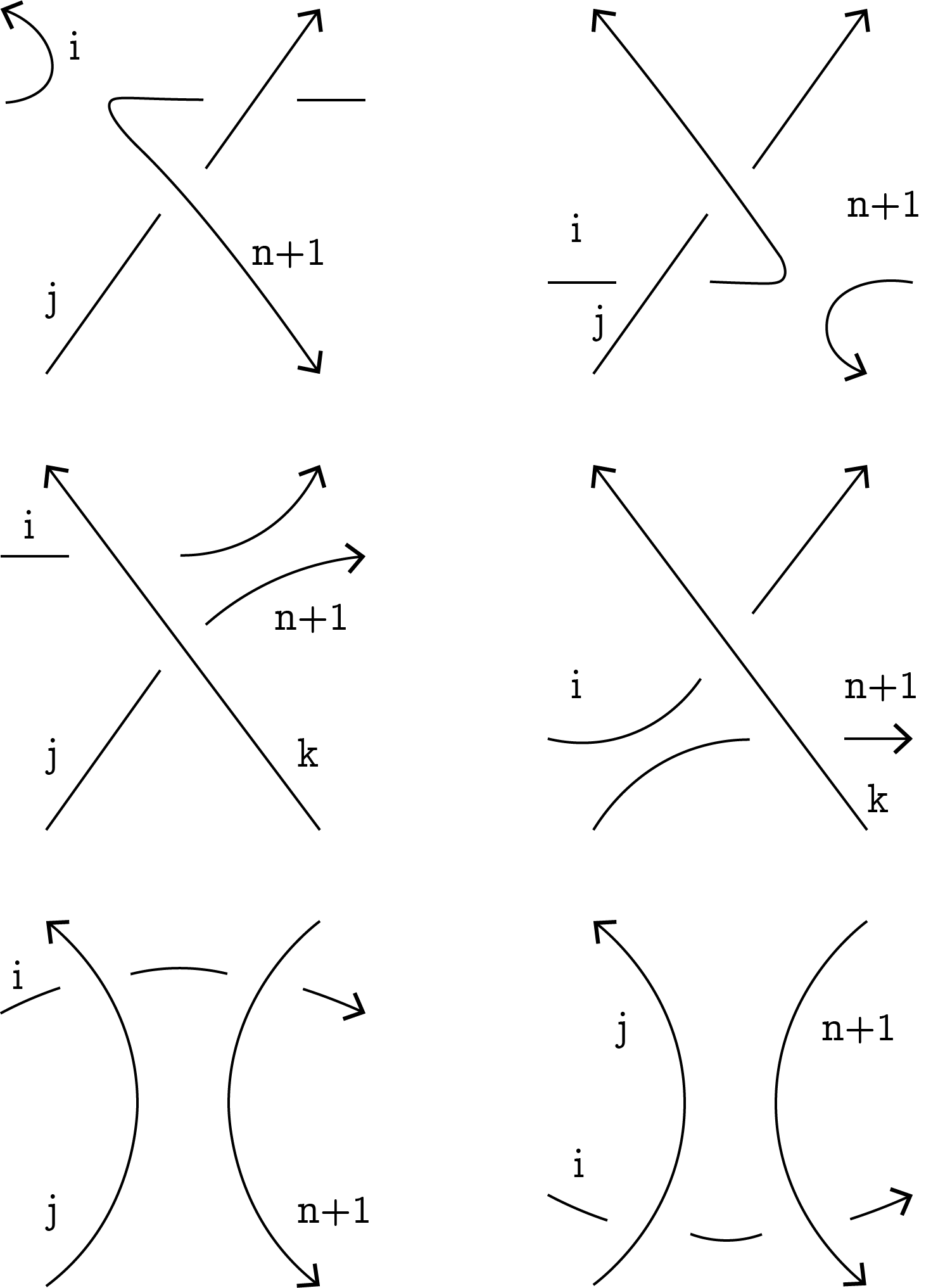}
\caption{Smoothings before and after RIII move.} \label{fig15}
\end{figure}
				
\underline{SV-move:} Consider SV-move presented in Figure~\ref{fig16}, were link components are numerated by $i$, $j$, and $k$, where some of these numbers may coincide.  
\begin{figure}[ht]
\centering
\includegraphics[scale=0.5]{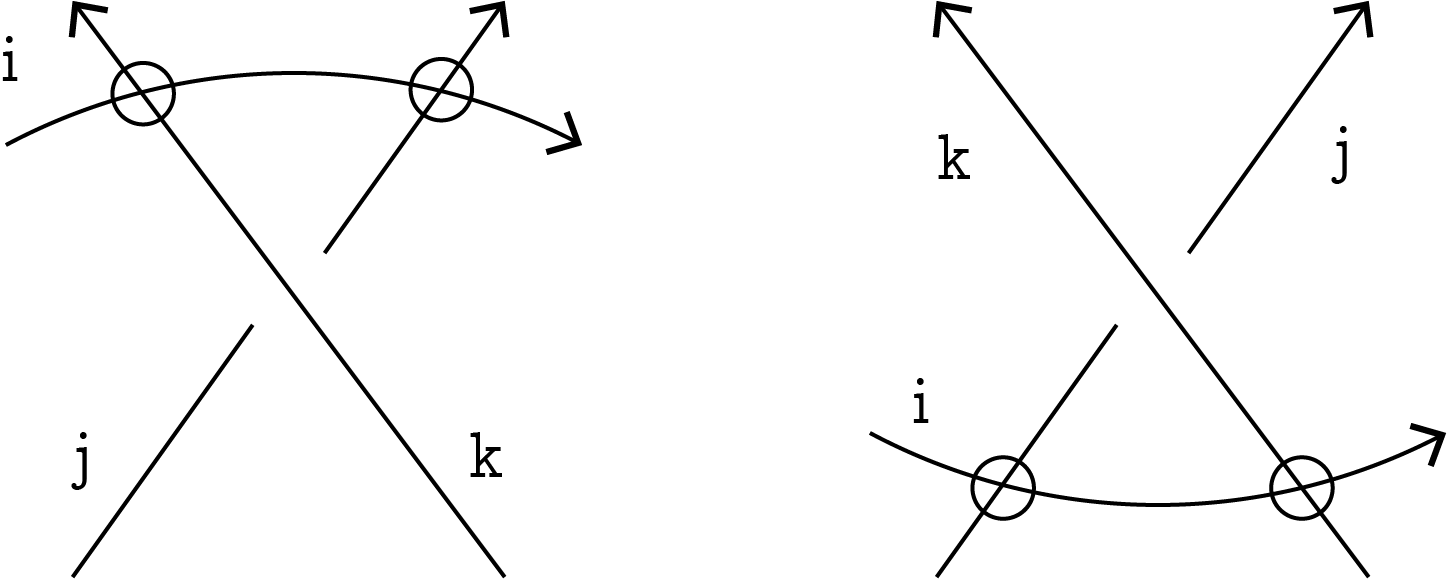}
\caption{SV-move.} \label{fig16}
\end{figure}

Let us apply type-3 smoothing in a crossing of the initial (before SV-move) diagram and  a crossing of the terminal (after SV-move) diagram, see Figure~\ref{fig17}. It is clear that smoothed diagrams are equivalent. 
\begin{figure}[ht]
\centering
\includegraphics[scale=0.5]{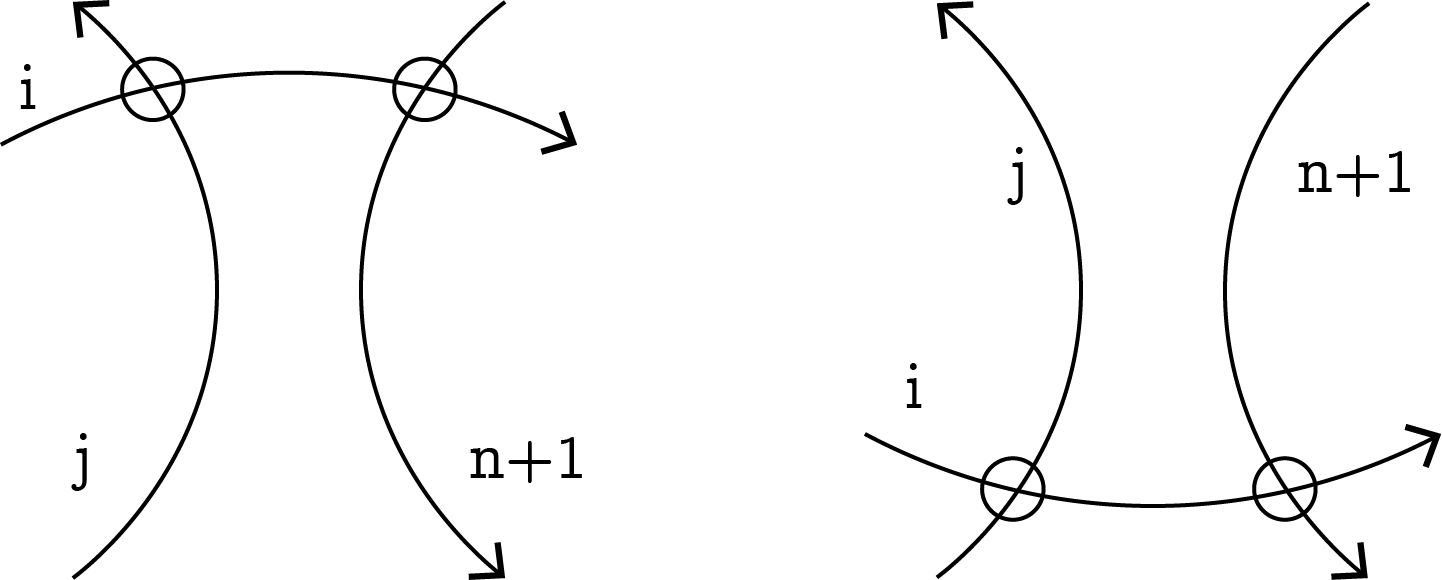}
\caption{Smoothing before and after SV-move.}
\label{fig17}
\end{figure}
Thus, we have shown that $S_2$ satisfies weight function conditions.

For $S_3$ the proof follows by analogous considerations. 
\end{proof}

\rm 

Since by Theorem~\ref{theorem2} function $S_2$ is an even weight function, according to Theorem~\ref{theorem1} we can construct the following invariants. 

\begin{corollary} \label{cor1} 
Consider function $B^i : D \to \VL$, defined by 
$$
B^i = \sum\limits_{c\in C_i(D)}\sgn(c) [K^c], 
$$
and flat virtual link invariant $B^i_{\text{flat}} : D \to \VL$, defined by 
$$
B^i_\text{flat} = \sum\limits_{c\in C_i(D)}\sgn(c) ([K^c]-[K^{c^*}]), 
$$
where $c \in  C_i(D)$ means that in the crossing $c$ both arcs belong to the $i$-th component of a link. Then $B^i$ is an ordered oriented virtual link invariant and $B^i_{\text{flat}}$ is an ordered oriented flat virtual link invariant. 
\end{corollary}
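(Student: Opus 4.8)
The plan is to recognize $B^i$ and $B^i_{\text{flat}}$ as values of the $I$- and $I_f$-functions of Theorem~\ref{theorem1}, and then let that theorem, together with Theorem~\ref{theorem2}, do the work. I would take the odd weight function $v=\sgn\in W^{odd}_{\mathbb Z}$ and the even weight function $w=S_2\in W^{even}_{\VL}$ (the latter by Theorem~\ref{theorem2}), and restrict attention to the subset $C_i(\mathcal D)$ of crossings whose two arcs lie on the $i$-th component. By Example~\ref{example2} the characteristic function $1_{C_i(\mathcal D)}$ is an even weight function, so $C_i(\mathcal D)$ is consistent and, by Definition~\ref{def4} and Remark~\ref{remark1}, the product $\sgn*1_{C_i(\mathcal D)}$ is a well-defined odd weight function that vanishes off $C_i(\mathcal D)$. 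For each fixed flat link $g\in\VL$ the scalar $I(D;\sgn*1_{C_i},S_2,g)=\sum_{c\in C_i(D),\,[D^c]=g}\sgn(c)$ is an integer, and assembling these with coefficients gives the $\VL$-valued expression $B^i=\sum_{g}I(D;\sgn*1_{C_i},S_2,g)\,g$; using the mirror pair $(v^{*},w^{*})$ with $v^{*}(c)=\sgn(c^{*})=-\sgn(c)$ one likewise obtains $B^i_{\text{flat}}=\sum_g I_f(D;\sgn*1_{C_i},S_2,g)\,g$, which reproduces the subtracted terms $-\sgn(c)[K^{c^*}]$ in the definition.

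With this identification in hand, invariance under the purely virtual moves and under RII, RIII and SV is inherited directly from Theorem~\ref{theorem1}. Concretely, the two crossings created by an RII-move on the $i$-th component carry opposite signs while $S_2$, being even, assigns them flat-equivalent smoothings by Theorem~\ref{theorem2}, so their contributions cancel; RIII- and SV-invariance are exactly the matching conditions (C2) and (C3) that Theorem~\ref{theorem2} verifies for $S_2$. The crossing-change invariance needed to upgrade $B^i_{\text{flat}}$ from a virtual to a flat invariant is the content of Theorem~\ref{theorem1}(ii) applied to the pair $(v,w)$ and its mirror, since $v\in W^{odd}$ and $w\in W^{even}$ force $v^{*}\in W^{odd}$ and $w^{*}\in W^{even}$.

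The step I expect to be the main obstacle is the RI-move, the one place where the $I$-function formalism meets the geometry of type-2 smoothing. Here I would feed in the explicit description from Theorem~\ref{theorem2}: a crossing $c$ reducible by RI is a self-crossing, hence lies in $C_i(\mathcal D)$, and its type-2 smoothing satisfies $[D^c]=[D]\sqcup\bigcirc$, the flat link $[D]$ with one split unknot adjoined in the ordering and orientation dictated by the smoothing. One must then check that these adjoined-unknot values do not destroy invariance: for $B^i$ by testing the RI-condition ``$g\notin R(w,L)$ or $R(v,L)=\{0\}$'' of Definition~\ref{def5} against the set of such values, and for $B^i_{\text{flat}}$ by matching the kink contribution with that of its mirror so that the difference $[K^c]-[K^{c^*}]$ cancels. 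Controlling these split-unknot terms is precisely the delicate point, and it is where I would concentrate the effort.
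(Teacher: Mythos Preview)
Your approach is exactly the paper's: the corollary is stated with no standalone proof, only the preceding sentence ``Since by Theorem~\ref{theorem2} function $S_2$ is an even weight function, according to Theorem~\ref{theorem1} we can construct the following invariants.'' Your expansion --- recognising $B^i$ and $B^i_{\text{flat}}$ as the assembled sums $\sum_g I(D;\sgn*1_{C_i},S_2,g)\,g$ and $\sum_g I_f(D;\sgn*1_{C_i},S_2,g)\,g$ --- is the intended reading, and your treatment of RII, RIII and SV via Theorem~\ref{theorem2} is correct.

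You are also right that RI is where the work lies, but the resolution you sketch for $B^i$ does not close the gap. The hypothesis in Definition~\ref{def5} and Theorem~\ref{theorem1}(i) is a disjunction: either $g\notin R(w,L)$ or $R(v,L)=\{0\}$. With $v=\sgn$ one has $R(\sgn,L)\subset\{\pm1\}$, never $\{0\}$; and by Theorem~\ref{theorem2} the value $S_2$ assigns to an RI-kink is exactly $[D]$ with a split unknot adjoined, so that value \emph{does} lie in $R(S_2,L)$. Hence neither disjunct holds for that particular $g$, and Theorem~\ref{theorem1}(i) says nothing about the coefficient of $[D\sqcup\bigcirc]$ in your sum. ``Testing the RI-condition against the set of such values'' therefore fails as written; you must instead argue directly from the four RI cases in the proof of Theorem~\ref{theorem2} (Figure~\ref{fig11}) that the kink contribution is harmless. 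For $B^i_{\text{flat}}$ your plan is on firmer ground: the needed cancellation $[K^{c}]=[K^{c^{*}}]$ for a kink $c$ is precisely the statement to verify, and it too must be read off from that same case analysis, tracking how the ordering and orientation of the split-off unknot behave under crossing change. The paper leaves all of this implicit.
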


\rm

Invariants $B^i$ and  $B^i_{\text{flat}}$ appear to be useful for studying connected sums of virtual knots. We will demonstrate in Example~\ref{example5}  that these invariants can be used to prove that a Kishino knot, the famous connected sum of two trivial virtual knots~\cite[p.~23]{Fenn}, is non-trivial. 

\begin{example} \label{example5}
Let $K$ be an oriented virtual Kishino knot presented in Figure~\ref{fig18}. We will show that $B^1(K) \neq 0$, hence $K$ is distinguished  from  the unknot by $B^1$.
	 \begin{figure}[ht]
	 	\centering
	 	\includegraphics[scale=0.13]{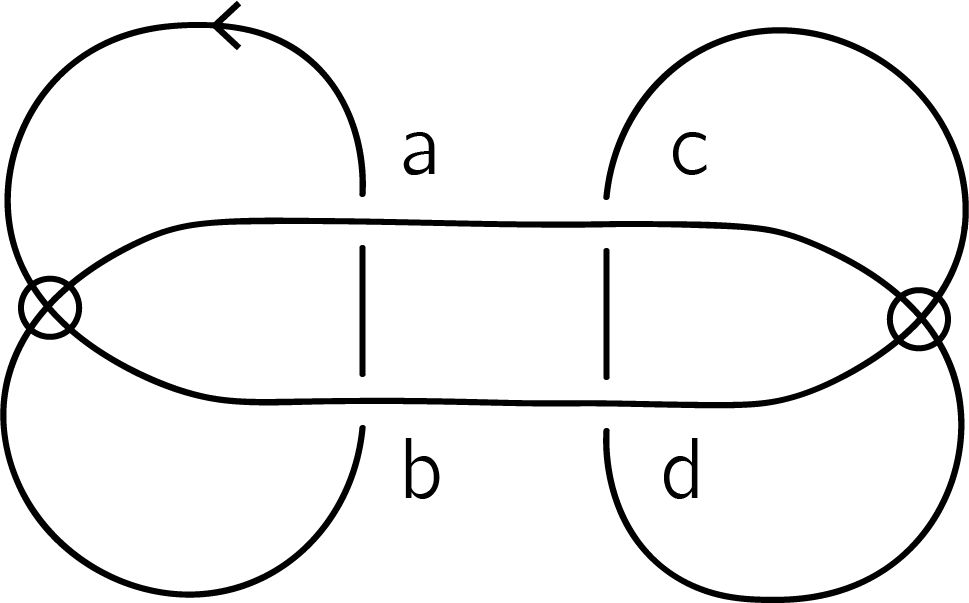}
	 	\caption{Kishino knot.} \label{fig18}
	 \end{figure}
	
\noindent	
 Denote the classical crossings by $a$,$b$,$c$, and $d$ as shown in Figure~\ref{fig18}. To calculate $B^1(K)$ we find signs of these crossings
 $$
\sgn(a) = \sgn(d) = -1 \quad and \quad \sgn(b) = \sgn(c) = 1. 
$$
Since $K$ is a knot, according to the definition of $B^1$, we consider smoothings in all crossings:  
$$
B^1(K) = -[K^a] + [K^b] + [K^c] - [K^d].
$$
Each smoothing provide an ordered oriented 2-component virtual link. One can check, that
$$
[K^a] = [K^d] \quad and \quad [K^b] = [K^c],
$$
so we only need to prove, that $[K^a]$ and $[K^b]$, presented in Figure~\ref{fig19}, are distinct.
\begin{figure}[ht]
\centering
\includegraphics[scale=0.13]{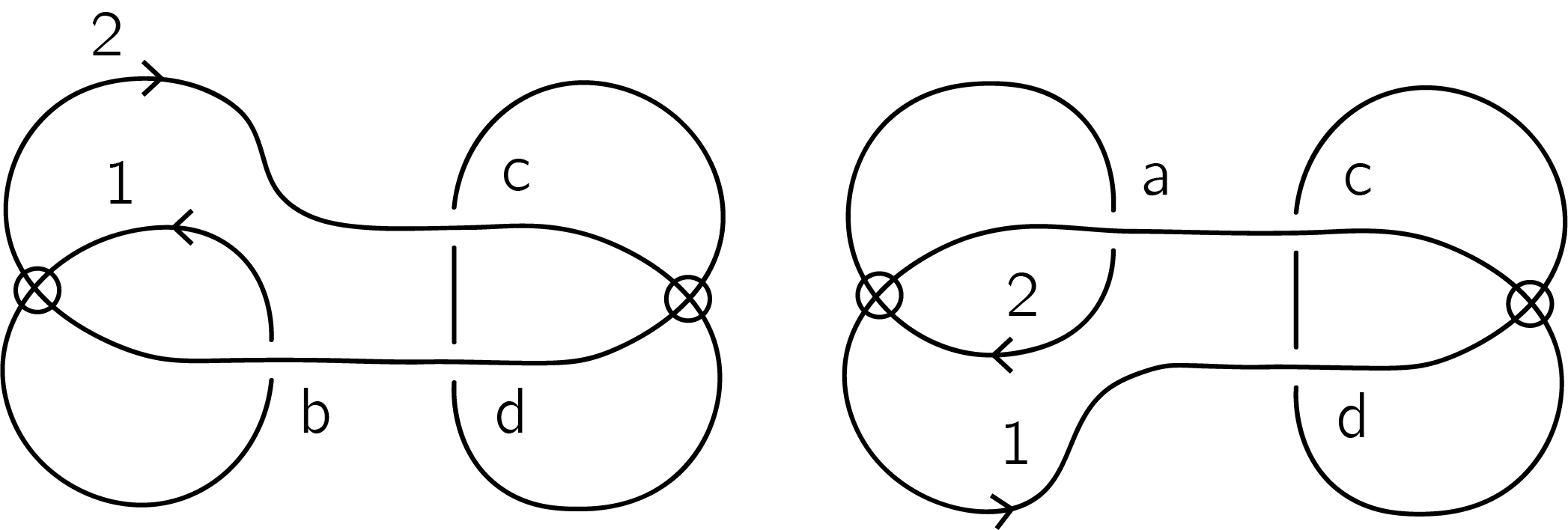}
\caption{Diagrams of $K_1 = K^a$ and $K_2 = K^b$.}
\label{fig19}
\end{figure}
To simplify notations we change notations, as $K_1 = K^a$ and $K_2 = K^b$. It is easy to see that only crossings $c$ and $d$ corresponds to $2$-nd component of $K_1$. Then
$$
B^2_\text{flat}(K_1) = [K_1^c] - [K_1^{c^*}] - [K_1^d] + [K_1^{d^*}]  \quad and \quad B^2_\text{flat}(K_2) = 0,
$$
where $c^*$ and $d^*$ are crossing changes of $c$ and $d$, respectively. Smoothed diagrams of $K_1$ are presented in Figure~\ref{fig20}, and smoothed diagrams, corresponding to $c^*$ and $d^*$, are presented in Figure~\ref{fig21}.  
	  	\begin{figure}[ht]
	  	\centering
	  	\includegraphics[scale=0.13]{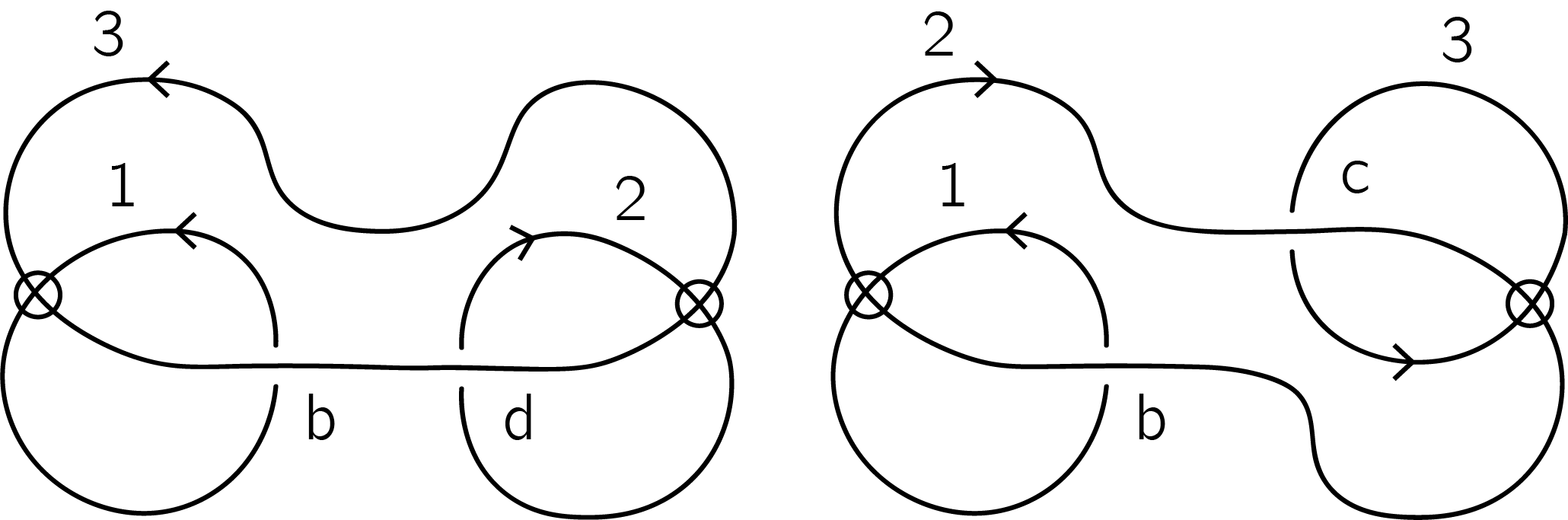}
	  	\caption{Diagrams of $K_1^c$ and $K_1^d$.} \label{fig20}
	  \end{figure}
	\begin{figure}[ht]
	  	\centering
	  	\includegraphics[scale=0.13]{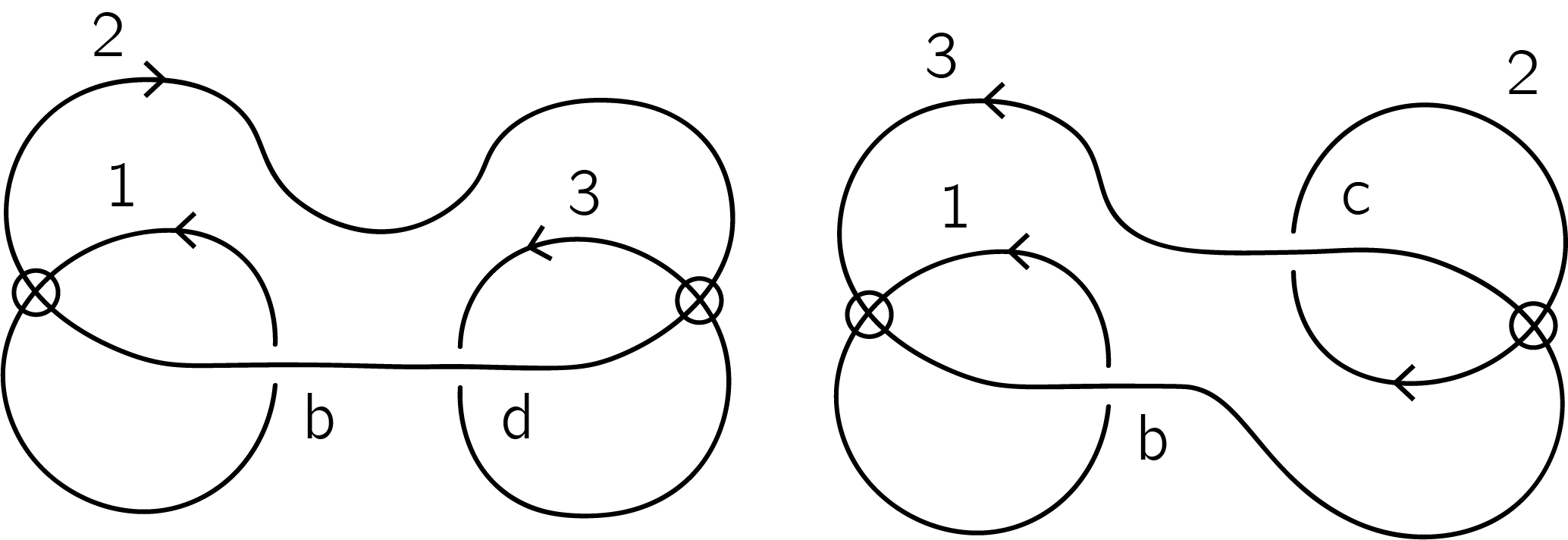}
	  	\caption{Diagrams of $K_1^{c^*}$ and $K_1^{d^*}$.} \label{fig21}
	  \end{figure}
In $B^2_\text{flat}(K_1)$ there are two summands with "+" and two with "-". Note that in $K_1^c$ and in $K_1^{d^*}$ the $3$-rd component is nontrivially linked with two other components, but for $K_1^{c^*}$ and $K_1^{d}$ it is not true. Hence, there are no cancelations and  $B^2_\text{flat}(K_1) \neq 0$. Therefore, Kishino knot is not equivalent to the unknot.  
\end{example}

\section{Recurrent construction of a sequence of invariants} \label{sec6}

In this section we will describe a recurrent construction of invariants based on Theorem~\ref{theorem1} and using recurrently defined odd and even weight functions. Below we consider the case of ordered links, and analogous statements holds for the case of unordered links.    

As well as above, $\VL$ denotes a free $\mathbb{Z}$-module generated by ordered oriented flat virtual links. Any invariant $A_1$ of ordered oriented flat virtual links taking values in a group $G$  may be extended to a homomorphism $A_1 :\VL \to G$. Similarly, any even weight function $F_1 \in W^{even}_{\VL}$ defines a weight function $A_1 \circ F_1 \in W^{even}_{G}$.

Suppose that there is given an even weight function $w_1 \in W^{even}_{H_1}$ for some abelian group $H_1$. Consider an odd weight function $u_1$ such that $u_1 \in W^{odd}_{Z}$ if $G$ is a group, and $u_1 \in W^{odd}_{G}$ if $G$ is a ring. Then according to Definitions~\ref{def5} and~\ref{def6} we can define $I$-function (see formula (\ref{eqn4})) and  flat $I_f$-function (see formula (\ref{eqn5})) by using even weight $w_1$ and odd weight $v_1  = u_1 * (A_1 \circ F_1)$, where the product $*$ of weights is defined by formula (\ref{eqn3}): 
$$
I (D; v_1, w_1, h_1) = \sum_{w_1(c) = h_1} v_1 (c)
$$
and
$$  
I_f (D; v_1, w_1, h_1) = \sum_{w_1(c) = h_1} v_1 (c) + \sum_{w_1(c^*) = h_1} v_1 (c^*),  
$$ 
for $h_1 \in H_1$ such that $h_1 \not\in R(w_1, L)$ or $R(v_1, L) = \{0\}$, where $D$ is a diagram of an ordered oriented virtual link $L$.  

The procedure can be repeated by taking odd weights $u_i \in W^{odd}_G$, even weights $F_i \in W^{even}_{\VL}$,  and even weights $w_i \in W^{even}_{H_i}$ for abelian groups $H_i$, where  $i = 2, 3, \ldots$. By Theorem~1, we get a family of  ordered oriented flat virtual link invariants $A_i : \VL \to G$ defined by  
$$
A_i(D) = I_f(D; v_{i-1}, w_{i-1}, h_{i-1}), \qquad \textrm {for } \quad i > 1, 
$$
where $v_{i-1} = u_{i-1} * (A_{i-1} \circ F_{i-1})$. Continuing the process, for $i> 1$ we will get I-functions 
$$
I (D; v_i, w_i, h_i) = \sum_{w_i(c) = h_i} v_i (c)
$$
and flat I-functions
$$  
I_f (D; v_i, w_i, h_i) = \sum_{w_i(c) = h_i} v_i (c) + \sum_{w_i(c^*) = h_i} v_i (c^*).  
$$ 
 
Note, that in general, $h_i$ cannot be arbitrary elements of $H_i$, since
$$
I_f(D; u_{i} * (A_{i} \circ F_{i}), w_{i}, h_{i})
$$
is guaranteed to be an invariant only for $h_{i} \notin R(w_{i},D)$. Finding proper elements $h_i$ to make $A_{i+1} \circ F_{i+1}$ a well-defined weight function seems a difficult problem. To avoid this problem we require  $R(u_i, D) = \{0\}$ for all $D$.
Then we can take $h_i$ equals to any element of the group $H_i$ and we obtain the following proposition.

\begin{proposition}\label{prop1}
Assume that there is an ordered oriented flat virtual link invariant $A_1$ taking values in a group $G$ and a sequence $\{(F_i, w_i, h_i, u_i)\}_{i \in \mathbb{N}}$ such that $F_i \in W^{even}_{\VL} $, $w_i \in W^{even}_{H_i}$, $h_i \in H_i$, and $u_i \in W^{odd}_\mathbb{Z}$  (or, $u_i \in W^{odd}_\mathbb{G}$ if $G$ is a ring) such that $R(u_i, D) = \{0\}$ for all link diagrams. Then there are even weight functions  $A_{i} \circ F_{i}$ and corresponding sequences of invariants 
$$
I(D; u_{i} * (A_{i} \circ F_{i}), w_{i}, h_{i}) \quad \text{and} \quad A_{i+1} = I_f(D; u_{i} * (A_{i} \circ F_{i}), w_{i}, h_{i}).
$$
\end{proposition}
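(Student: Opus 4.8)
The plan is to prove the statement by induction on $i$, using Theorem~\ref{theorem1} as the engine that converts an odd/even pair of weight functions into invariants. The inductive hypothesis I would carry is that $A_i$ is an ordered oriented flat virtual link invariant with values in the abelian group $G$, and hence, as noted at the start of the section, extends uniquely to a homomorphism $A_i : \VL \to G$ by the universal property of the free $\mathbb{Z}$-module $\VL$. The base case $i = 1$ is exactly the hypothesis of the proposition: $A_1$ is given as such an invariant, so nothing needs to be checked.

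For the inductive step, suppose $A_i : \VL \to G$ is available. First I would observe that $A_i \circ F_i$ is an \emph{even} weight function: since $F_i \in W^{even}_{\VL}$ and $A_i$ is a homomorphism of abelian groups, the composition lies in $W_G$ by the composition rule for weight functions recorded after~(\ref{eqn3}), and it remains even because a homomorphism respects the equality $F_i(\beta) = F_i(\alpha)$ of Definition~\ref{def2}. Next, setting $v_i = u_i * (A_i \circ F_i)$ with the product $*$ of~(\ref{eqn3}), I would verify the parity: for an RII-move with involved crossings $\alpha,\beta$ one has $u_i(\beta) = -u_i(\alpha)$ and $(A_i \circ F_i)(\beta) = (A_i \circ F_i)(\alpha)$, so $v_i(\beta) = -v_i(\alpha)$ and therefore $v_i \in W^{odd}_G$. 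Applying Theorem~\ref{theorem1} with the odd weight $v_i$ and the even weight $w_i$ then yields that $I(D; v_i, w_i, h_i)$ is an ordered oriented virtual link invariant and that $I_f(D; v_i, w_i, h_i)$ is an ordered oriented flat virtual link invariant; defining $A_{i+1} = I_f(D; v_i, w_i, h_i)$, which is again $G$-valued and hence again extends to a homomorphism on $\VL$, closes the induction.

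The one point that genuinely requires care, and the only place the hypothesis $R(u_i, D) = \{0\}$ enters, is justifying that the index $h_i$ may be chosen freely in $H_i$. Definitions~\ref{def5} and~\ref{def6} demand either $h_i \notin R(w_i, L)$ or $R(v_i, L) = \{0\}$, and computing $R(w_i, L)$ for each link $L$ is in general intractable, as the text itself warns. The resolution is that $R(u_i, D) = \{0\}$ forces $R(v_i, L) = \{0\}$: any crossing $c$ reducible by an RI-move has $u_i(c) = 0$, so $v_i(c) = u_i(c)\,(A_i \circ F_i)(c) = 0$, and thus the second alternative of both definitions holds for every $h_i$. I expect this transfer of the vanishing condition from $u_i$ to $v_i$ to be the main obstacle to state cleanly; everything else is a bookkeeping of the odd/even grading together with a direct appeal to Theorem~\ref{theorem1}, iterated along the sequence $\{(F_i, w_i, h_i, u_i)\}_{i \in \mathbb{N}}$.
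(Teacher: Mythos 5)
Your proposal is correct and follows essentially the same route as the paper: the paper's argument is exactly the preceding construction (extend $A_i$ to a homomorphism on $\VL$, note $A_i\circ F_i$ is even, form the odd product $u_i*(A_i\circ F_i)$, and invoke Theorem~\ref{theorem1}), with the hypothesis $R(u_i,D)=\{0\}$ serving precisely to force $R(u_i*(A_i\circ F_i),L)=\{0\}$ so that $h_i$ may be arbitrary. You merely make explicit the induction and the parity check that the paper leaves implicit.
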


The sequence we obtain is useful when $(F_i, w_i, h_i, u_i) = (F_1, w_1, h_1, u_1)$ for all $i$.

\begin{corollary}\label{cor2} Given three weight functions $v \in W^{odd}_H$, $w \in  W^{even}_G$ and $F \in W^{even}_{\VL}$ such that $R(w, L) = \{0\}$ for all links $L$ and a sequence $\{g_i \in G\}_{i \in \mathbb{N}}$ there is an infinite sequence of weight functions $\{v_i\}_{i \in \mathbb{N}}$, generated by them.
\end{corollary}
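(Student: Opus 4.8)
The plan is to obtain Corollary~\ref{cor2} as the special case of Proposition~\ref{prop1} in which the data are held constant along the recursion, that is $(F_i, w_i, u_i) = (F, w, v)$ for every $i$, with the prescribed shifts $h_i = g_i$. Accordingly I would define the sequence $\{v_i\}$ by the recursion itself: take $v_1 = v$ as the base (an odd weight by hypothesis), and, having produced an odd weight $v_i$ for which the hypothesis of Definition~\ref{def5} is satisfied at the shift $g_i$, apply Theorem~\ref{theorem1}(ii) to obtain the flat virtual link invariant
$$
A_{i+1} := I_f(D; v_i, w, g_i),
$$
extend it to a homomorphism $A_{i+1} : \VL \to H$, compose with $F$, and set $v_{i+1} := v * (A_{i+1} \circ F)$, the product being taken in the module structure recorded after~(\ref{eqn3}). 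This produces each successive term from the data $(v, w, F, \{g_i\})$, which is the meaning of ``generated by them''.

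The second task is an induction on $i$ showing that these terms have the asserted types, so that the recursion never stalls. Two checks are routine: $A_{i+1} \circ F$ is a weight function by the composition-with-homomorphism principle stated after~(\ref{eqn3}), and it is \emph{even} because $A_{i+1}$ respects the equality $F(\beta) = F(\alpha)$ supplied by $F \in W^{even}_{\VL}$ under an RII-move; and $v_{i+1} = v * (A_{i+1} \circ F)$ is \emph{odd}, since the product of an odd weight with an even weight changes sign under an RII-move --- if $\alpha, \beta$ are the crossings involved then $v(\beta) = -v(\alpha)$ while $(A_{i+1} \circ F)(\beta) = (A_{i+1} \circ F)(\alpha)$, whence $v_{i+1}(\beta) = -v_{i+1}(\alpha)$. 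Thus the hypotheses $v_i \in W^{odd}$ and $A_i \circ F \in W^{even}$ remain in force at every stage, exactly as required to feed the next step.

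The step that carries the real content, and the one I expect to be the main obstacle, is verifying at each stage the well-definedness hypothesis of Definition~\ref{def5}, namely that $g_i \notin R(w, L)$ or $R(v_i, L) = \{0\}$, since this is precisely what licenses the application of Theorem~\ref{theorem1}(ii) and hence the existence of $A_{i+1}$. Here the standing assumption $R(w, L) = \{0\}$ does the work: every crossing that can be removed by an RI-move has $w$-value $0$, so for any shift $g_i \neq 0$ no such crossing ever enters the sum $\sum_{w(c) = g_i} v_i(c)$, the clause $g_i \notin R(w, L)$ holds, and $I_f(D; v_i, w, g_i)$ is an invariant irrespective of how $v_i$ behaves under RI-moves; the exceptional shift $g_i = 0$ is instead controlled by the second clause $R(v_i, L) = \{0\}$, which is the delicate case to track along the recursion. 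Once this hypothesis is secured at every level the induction closes, the invariants $A_{i+1}$ and hence the weights $v_{i+1}$ exist for all $i$, and the sequence $\{v_i\}_{i \in \mathbb N}$ is infinite, as claimed.
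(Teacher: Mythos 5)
There is a genuine gap here, and you in fact point at it yourself without closing it. You instantiate Proposition~\ref{prop1} with $u_i = v$, so the odd multiplier at every stage of your recursion is the given $v \in W^{odd}_H$ and $v_{i+1} = v * (A_{i+1}\circ F)$. But Proposition~\ref{prop1} explicitly requires $R(u_i, D) = \{0\}$ for all diagrams --- this is precisely what permits the shift $h_i$ to be an \emph{arbitrary} element of the group --- and nothing in the hypotheses of Corollary~\ref{cor2} gives $R(v, L) = \{0\}$; the standing assumption is $R(w,L) = \{0\}$ for the \emph{even} weight $w$. Consequently your recursion is licensed only at shifts $g_i \neq 0$, where the clause $g_i \notin R(w,L)$ of Definition~\ref{def5} applies; for $g_i = 0$ you need $R(v_i, L) = \{0\}$, which you correctly call ``the delicate case to track along the recursion'' and then never track. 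It genuinely fails in general: for an RI-reducible crossing $c$ one has $v_i(c) = v(c)\cdot A_i(F(c))$, and neither factor need vanish. Since the corollary allows an arbitrary sequence $\{g_i \in G\}$, this is not a removable edge case but the heart of the statement.

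The paper closes exactly this gap by a different choice of odd multiplier: it sets $u_i = \sgn * w$. This is odd, being the product of the odd weight $\sgn$ with the even weight $w$, and it vanishes on every RI-reducible crossing precisely because $R(w,L) = \{0\}$; hence $R(u_i, D) = \{0\}$, therefore $R(u_i * (A_i\circ F), L) = \{0\}$ at every stage, the second clause of Definition~\ref{def5} holds for \emph{every} $g_i$ including $0$, and Proposition~\ref{prop1} applies verbatim with $F_i = F$, $w_i = w$, $h_i = g_i$. The given $v$ is used only once, to seed the recursion via $A_1(D) = I_f(D; v, w, g)$. Your even/odd bookkeeping and the overall plan of specializing Proposition~\ref{prop1} to constant data are otherwise sound; the argument is repaired by replacing $u_i = v$ with $u_i = \sgn * w$.
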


\begin{proof}
Let $A_1(D) = I_f(D, v, w, g)$. By taking $F_i = F$, $w_i = w$, $u_i = \sgn * w$ and applying Proposition~\ref{prop1} we obtain the desired sequence.
\end{proof}

\begin{remark} \label{remark2} \rm
On each step we may substitute $\VL$ by a module generated by a regular set of flat links $\mathcal{D}_{\text{flat}}$ and consider weight functions and invariants that are defined  for those links. Proposition~\ref{prop1} and Corollary~\ref{cor2} will remain true with some obvious changes.
\end{remark}

Further we will restrict ourselves to the case where $F_i$ are smoothings and provide some examples, generalizing already known invariants.
Before we continue with examples let us define a polynomial invariants, associated to those sequences.
For simplicity suppose that $G = H_i = \mathbb{Z}$.

\begin{theorem} \label{theorem3}
Let $S = \{s_1, \dots, s_k\}$ be a finite set of weight functions where $s_i = A_{m_i} \circ F_{m_i} \,$ for some  $m_i \in \mathbb{N}$, $w \in W^{odd}_\mathbb{Z}$ and $v \in W^{even}_\mathbb{Z}$ such that $R(v, D) = \{0\}$ for all $D$. Then
	$$
	\begin{gathered}
	F(t,  \ell_1, \ldots, \ell_k) = \sum_{c \in C(D)} w(c) t^{v(c)}\ell_1^{s_1(c)} \cdots \ell_k^{s_k(c)} - \sum_{c \in T(D)} w(c) \ell_1^{s_1(c)} \cdots \ell_k^{s_k(c)}
	\\- \sum_{c \not\in T(D)} w(c)  \ell_1^{A_{m_1}(D)} \cdots \ell_k^{A_{m_k}(D)}
	\end{gathered}
	$$
is a link invariant, where $T(D) =\{c \in C(D) \ | \ s_i(c) \in R(s_i, L) \text{ for all } i\}$.
\end{theorem}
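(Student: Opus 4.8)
The plan is to follow the template of the invariance proof for the $F$-polynomial $F^n_K(t,\ell)$ from~\cite{KPV18}, since $F(t,\ell_1,\dots,\ell_k)$ has exactly the same three-sum shape, with $w$ playing the role of $\sgn$, $v$ the role of $\ind$, each $s_i$ the role of the smoothed difference writhe $\nabla J_n(D_c)$, and each global exponent $A_{m_i}(D)$ the role of $\nabla J_n(D)$. First I would record the structural facts used repeatedly: each $s_i = A_{m_i}\circ F_{m_i}$ is an even weight function, being the composition of the even weight function $F_{m_i}\in W^{even}_{\VL}$ with the homomorphism $A_{m_i}:\VL\to\mathbb Z$, and composing with a homomorphism preserves parity; moreover each $A_{m_i}$ is a genuine ordered oriented flat virtual link invariant by Proposition~\ref{prop1}, so $A_{m_i}(D)$ is constant on the diagrams of a fixed link $L$ and $R(s_i,L)$ depends only on $L$. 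Consequently the membership test defining $T(D)$ is governed entirely by the values $s_i(c)$ together with link-invariant data, hence is compatible with the moves. The claim is that $F$ is a virtual (not flat) link invariant, so it suffices to check invariance under each generalized Reidemeister move, and crossing change need not be examined.

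For the purely virtual moves VRI, VRII, VRIII there is nothing to do: no classical crossing is created or destroyed, all weights are preserved by locality (C1), and the exponents $A_{m_i}(D)$ are invariants, so every term is unchanged. For the RI-move I would isolate the single involved crossing $c_0$. Because $R(v,D)=\{0\}$ we have $v(c_0)=0$, so the factor $t^{v(c_0)}=1$; and because $c_0$ is reducible by RI we have $s_i(c_0)\in R(s_i,L)$ for every $i$, whence $c_0\in T(D)$. Thus $c_0$ contributes $w(c_0)\,\ell_1^{s_1(c_0)}\cdots\ell_k^{s_k(c_0)}$ to the first sum, the identical monomial to the second sum, and nothing to the third; since the second sum is subtracted these cancel, and by (C1) the classification and contributions of all other crossings are untouched, so $F$ is unchanged.

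The RII-move is the heart of the argument. Here $w\in W^{odd}_{\mathbb Z}$ gives $w(\beta)=-w(\alpha)$, while $v$ and all $s_i$ are even, so $v(\alpha)=v(\beta)$ and $s_i(\alpha)=s_i(\beta)$ for all $i$; in particular $\alpha$ and $\beta$ lie inside $T(D)$ or outside $T(D)$ simultaneously. If both lie in $T(D)$, their equal monomials with opposite coefficients cancel within the first sum and again within the second, and they contribute nothing to the third. If both lie outside $T(D)$, they cancel in the first sum, contribute nothing to the second, and in the third sum contribute a factor $w(\alpha)+w(\beta)=0$ times the common monomial $\ell_1^{A_{m_1}(D)}\cdots\ell_k^{A_{m_k}(D)}$; it is essential here that the third-sum exponents are the global invariants $A_{m_i}(D)$, identical for both crossings. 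Finally, RIII and SV are handled by conditions (C2) and (C3): every weight function $w,v,s_1,\dots,s_k$ takes equal values on corresponding crossings before and after the move, the $A_{m_i}(D)$ are invariant, and so each of the three sums is preserved term by term.

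The step I expect to be the main obstacle is the bookkeeping for RII combined with the third sum, rather than any single computation: one must verify that $\alpha$ and $\beta$ always share their $T$-membership (guaranteed by evenness of the $s_i$) and that the constant exponents $A_{m_i}(D)$ genuinely are invariants, so that the two third-sum contributions carry identical monomials and cancel against the odd coefficients. This last point rests entirely on Proposition~\ref{prop1}, so the delicate part is to invoke it correctly, ensuring the recurrently defined $A_{m_i}$ are well-defined invariants on the relevant regular set of links before using $A_{m_i}(D)$ as a fixed exponent.
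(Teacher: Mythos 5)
Your proposal is correct and follows exactly the route the paper takes: the paper's proof of Theorem~\ref{theorem3} simply states that the argument is analogous to the invariance proof of the $F$-polynomials in~\cite[Theorem~3.3]{KPV18}, and your write-up is precisely that analogy carried out in detail, with $w$, $v$, $s_i$, $A_{m_i}(D)$ replacing $\operatorname{sgn}$, $\operatorname{Ind}$, $\nabla J_n(D_c)$, $\nabla J_n(D)$. The key cancellations you identify (RI via $R(v,D)=\{0\}$ and $c_0\in T(D)$, RII via oddness of $w$ and evenness of $v$ and the $s_i$, RIII and SV via (C2)--(C3)) are exactly the steps of that template.
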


\begin{proof}
The proof is analogous to the proof of invariance of F-polynomials in~\cite[Theorem~3.3]{KPV18}.
\end{proof}

\begin{example} \label{example6} Let $v = \ind$, $w = \sgn$, $A_1 = \nabla J_n$ and $s_1 = \nabla J_n(D_c) = A_1 \circ F$, where F is a type-1 smoothing presented in Figure~\ref{fig5}. Then we get F-polynomials from~\cite{KPV18}: 
$$
F_{D}^{n}(t,\ell)  =  \sum_{c \in C(D)} w (c)  t^{v(c)} \ell^{s_1(c)} - \sum_{c\in T(D)}w(c) \ell^{s_1(c)}  - \sum_{c\notin T(D)} w(c) \ell^{A_1(D)},
$$
where  $T(D)$ is a set of crossings of $D$ having the following property:
$$
T_{n}(D)=\{c \in D  \, \mid  \,  \nabla J_{n}(D_{c}) = \pm \nabla J_{n}(D)\}.
$$
\end{example}

\section{(n,m)-difference writhe and invariants of virtual knots} \label{sec7}

Now we shall continue the recursive procedure and define new invariants generalizing $F$-polynomials.
Take $F_1$ to be a type-1 smooting, and let $F_i = F_1$ for $i\geq 2$. Recall that $\sgn \in W^{odd}_{\mathbb Z}$ and $\ind \in W^{even}_{\mathbb Z}$, and similar to Example~\ref{example4} (formular (\ref{eqn9})) define  $A_1 = I_{f}(D; \sgn, \ind, n) = \nabla J_n$. 
Denoting $ u_i = \sgn * \ind$ for $i \geq 1$ we define 
\begin{eqnarray*}
\nabla J_{n,m} (D) & = & A_2(D) =  I_{f}(D; u_2 * (A_1 \circ F_1), w_1, m) =  \\
&  = &   \sum_{\ind(c) = m} \sgn(c) \ind(c) \nJ_n(D_c) - \sum_{\ind(c) = -m} \sgn(c) \ind(c)\nJ_n(D_c) \\
 & = & m \sum_{\ind(c) = m} \sgn(c) \nJ_n(D_c).
\end{eqnarray*}
By Theorem~\ref{theorem1}, $\nabla J_{n,m} (D)$ is an oriented flat virtual knot invariant, and we call it the \emph{(n,m)-dwrithe} (difference writhe) of an oriented virtual knot $K$ whose diagram is $D$.

\begin{theorem} \label{theorem4}
(i) The polynomial
\begin{eqnarray*}
F^{n,m,k}_D (t,  \ell_1, \ell_2)  & = &  \sum_{c \in C(D)} \sgn(c) t^{\ind(c)} \ell_1^{\nabla J_n(D_c)} \ell_2 ^{\nabla J_{m,k}(D_c)} \\
& & - \sum_{c \in T(D)} \sgn(c) \ell_1^{\nabla J_n(D_c)}\ell_2^{\nabla J_{m,k}(D_c)} -  \sum_{c \not\in T(D)} \sgn(c) \ell_1^{\nabla J_n(D)}\ell_2 ^{\nabla J_{m,k}(D)} ,
\end{eqnarray*}
 where 
 $$
 T(D)=\{c \in C(D) \ | A_2 \circ F_2 \in R(A_2 \circ F_2, L) \text{ and }\ A_3 \circ F_3 \in R(A_3 \circ F_3, L)\}, 
 $$  
 is an oriented virtual  knot invariant. \\
 (ii) The family of  invariants $F_D^{n,m,k}(t, \ell_1, \ell_2)$ is stronger than invariants $F^n_D(t, \ell)$.
\end{theorem}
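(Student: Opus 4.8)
The plan is to read part~(i) directly off Theorem~\ref{theorem3} and to establish part~(ii) by combining a specialization argument with an explicit distinguishing example. For part~(i), I would identify $F^{n,m,k}_D(t,\ell_1,\ell_2)$ with the invariant produced by Theorem~\ref{theorem3} in the case of two extra variables. Concretely, take $w=\sgn\in W^{odd}_{\mathbb Z}$, $v=\ind\in W^{even}_{\mathbb Z}$, and $S=\{s_1,s_2\}$ with $s_1=A_1\circ F_1$ and $s_2=A_2\circ F_1$, where $A_1=\nabla J_n$ and $A_2=\nabla J_{m,k}$ are the flat virtual knot invariants furnished by the recurrent construction preceding the statement (Proposition~\ref{prop1}), and $F_1$ is the type-1 smoothing. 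The hypotheses of Theorem~\ref{theorem3} are then verified line by line: $\sgn$ and $\ind$ are odd and even weight functions by Example~\ref{example1}; the requirement $R(v,D)=\{0\}$ for all $D$ is exactly $R(\ind,K)=\{0\}$ from Example~\ref{example3}(i); and $s_1,s_2\in W^{even}$ because each $A_i$ is a flat invariant, hence a homomorphism $\VL\to\mathbb Z$, composed with the even weight function $F_1$ supplied by Theorem~\ref{theorem2}. Since the set $T(D)$ in the statement is precisely $\{c:\,s_i(c)\in R(s_i,L)\text{ for }i=1,2\}$, the polynomial coincides with the invariant of Theorem~\ref{theorem3}, and no separate Reidemeister-move check is needed.

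For part~(ii) I would argue in two steps. First, to see that $F^{n,m,k}$ is at least as strong as $F^n$, I would substitute $\ell_2\mapsto 1$: the leading sum over $C(D)$ then reproduces verbatim the leading sum of $F^n_D(t,\ell)$, and the two correction sums differ only through the index sets $T(D)$ and $T_n(D)$. Here one uses that $T(D)\subseteq T_n(D)$, since the defining condition of $T(D)$ augments the condition $\nabla J_n(D_c)\in R(\nabla J_n,L)$ defining $T_n(D)$ with the extra requirement $\nabla J_{m,k}(D_c)\in R(\nabla J_{m,k},L)$. I would then track the contribution of the crossings in $T_n(D)\setminus T(D)$; for each such $c$ one has $\nabla J_n(D_c)=\pm\nabla J_n(D)$, so the discrepancy is controlled entirely by the flat invariant $\nabla J_n$, and the aim is to conclude that any pair of knots separated by $F^n$ is still separated by $F^{n,m,k}$. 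Second, for strictness, I would exhibit an explicit pair of virtual knots $K_1,K_2$ with $F^n_{K_1}=F^n_{K_2}$ for every $n$ but $F^{n,m,k}_{K_1}\neq F^{n,m,k}_{K_2}$ for a suitable triple $(n,m,k)$, computing $\sgn(c)$, $\ind(c)$, $\nabla J_n(D_c)$ and $\nabla J_{m,k}(D_c)$ at every classical crossing of fixed diagrams.

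The hard part is the second step of part~(ii). Part~(i) is essentially bookkeeping, and the refinement direction is a finite manipulation of the correction terms; but the distinguishing example forces one to evaluate the second-order writhe $\nabla J_{m,k}$ on each smoothed diagram $D_c$, which itself unwinds into computing $\nabla J_m$ on doubly smoothed diagrams. Arranging a pair of knots for which all classical $F$-polynomials coincide yet some $F^{n,m,k}$ separates them, and verifying both facts by hand, is the genuinely delicate and computation-heavy portion of the argument; a secondary subtlety is that the naive specialization $\ell_2\mapsto 1$ does not recover $F^n$ on the nose, precisely because $T(D)$ is a proper subset of $T_n(D)$, so the refinement claim must be phrased as a statement about separating power rather than literal equality of polynomials.
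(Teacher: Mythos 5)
Part (i) of your proposal is correct and is essentially the paper's own proof: both reduce the claim to Theorem~\ref{theorem3} with $w=\sgn$, $v=\ind$, and $S$ consisting of the two weight functions $\nabla J_n(D_c)$ and $\nabla J_{m,k}(D_c)$ obtained from the recurrent construction; the hypothesis checks you list ($\sgn$ odd, $\ind$ even, $R(\ind,K)=\{0\}$, the $s_i$ even) are exactly what is needed.

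Part (ii), however, has two genuine gaps. First, your ``at least as strong'' step via the specialization $\ell_2\mapsto 1$ does not go through, and you correctly diagnose why but then leave it unrepaired: after setting $\ell_2=1$ the leading sums agree, but the correction sums differ by $\sum_{c\in T_n(D)\setminus T(D)}\sgn(c)\bigl(\ell_1^{\nabla J_n(D_c)}-\ell_1^{\nabla J_n(D)}\bigr)$, which is nonzero whenever some such $c$ has $\nabla J_n(D_c)=-\nabla J_n(D)\neq 0$; a statement ``about separating power'' does not follow from a specialization that fails to be the old invariant. The paper's fix is simpler and actually closes the gap: for $k$ larger than every index value occurring in a given diagram, $\nabla J_{m,k}$ vanishes identically on $D$ and on all $D_c$, so the $\ell_2$-exponents are all zero, the extra condition defining $T(D)$ is vacuous, $T(D)=T_n(D)$, and $F^{n,m,k}_D$ \emph{equals} $F^n_D$. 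Hence the family literally contains the $F$-polynomials and it suffices to exhibit one strict improvement. Second, that strict improvement --- the distinguishing example --- is the entire content of part (ii), and you only announce it. The paper produces a concrete five-crossing virtual knot $K'$ (built from the knot $4.31$), computes $\sgn$, $\ind$, $\nabla J_1$, $\nabla J_2$ on $K'$ and on all its type-1 smoothings to show $F^n_{K'}(t,\ell)=P_{K'}(t)=0$ for every $n$, and then computes $\nabla J_{1,1}(K'_{\delta_1})=-4$, $\nabla J_{1,1}(K'_{\delta_2})=4$ to get $F^{1,1,1}_{K'}(t,\ell_1,\ell_2)=-\ell_2^{-4}-\ell_2^{4}+2\neq 0$, separating $K'$ from the unknot. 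Without such a computation your part (ii) is a plan, not a proof.
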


\begin{proof}
(i) By letting $S = \{A_2 \circ F_2 ,A_3 \circ F_3\}$, $w = \sgn \in W^{odd}_{\mathbb Z}$ and $v = \ind \in W^{even}_{\mathbb Z}$ in Theorem~\ref{theorem3} we obtain the desired polynomial invariant.

(ii) It follows from the definition that $F_D^{n,m,k}$ coincide with $F^n_D$ for large enough $k$. Thus it suffices to find a knot which cannot be distinguished from the unknot by $F$--polynomials, but can be distinguished by $F_D^{n,m,k}$--polynomials.

Firstly, let us consider an oriented  virtual knot $K = 4.31$ from the Green's table (see also~\cite{IV20}) as presented in Figure~\ref{fig22}.
The diagram has four classical crossings: $\alpha$, $\beta$, $\gamma$, and $\delta$.
Denote by $K_{\alpha}$ and $K_{\beta}$ oriented virtual knots obtained by type-1 smoothing at classical crossings $\alpha$ and $\beta$, respectively. Figure~\ref{fig22} presents labelings of arcs of diagrams of $K$, $K_{\alpha}$ and $K_{\beta}$ satisfying the rule from Figure~\ref{fig4}.
\begin{figure}[h]
\centering
\includegraphics[scale = 0.25]{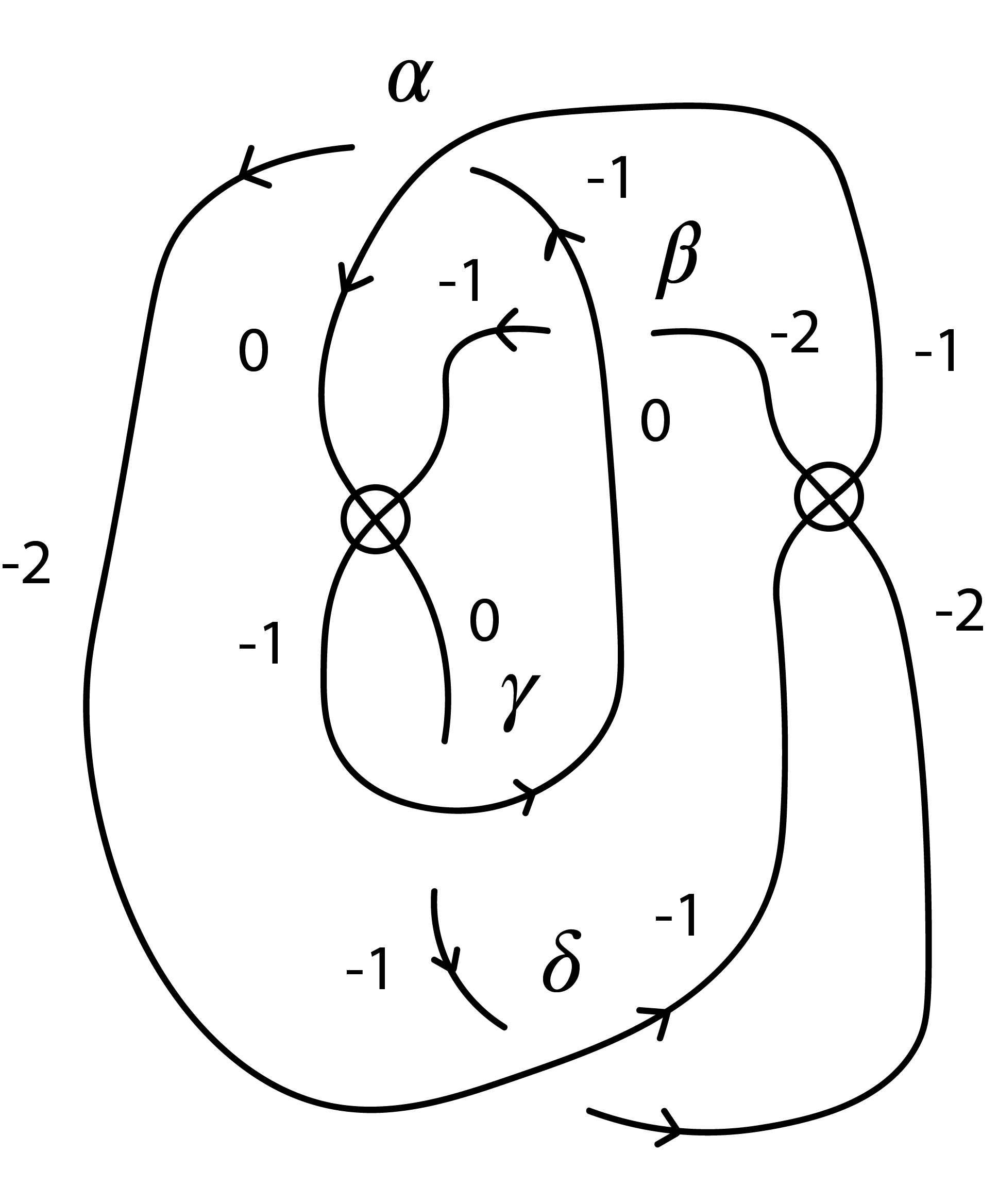} \quad
\includegraphics[scale = 0.25]{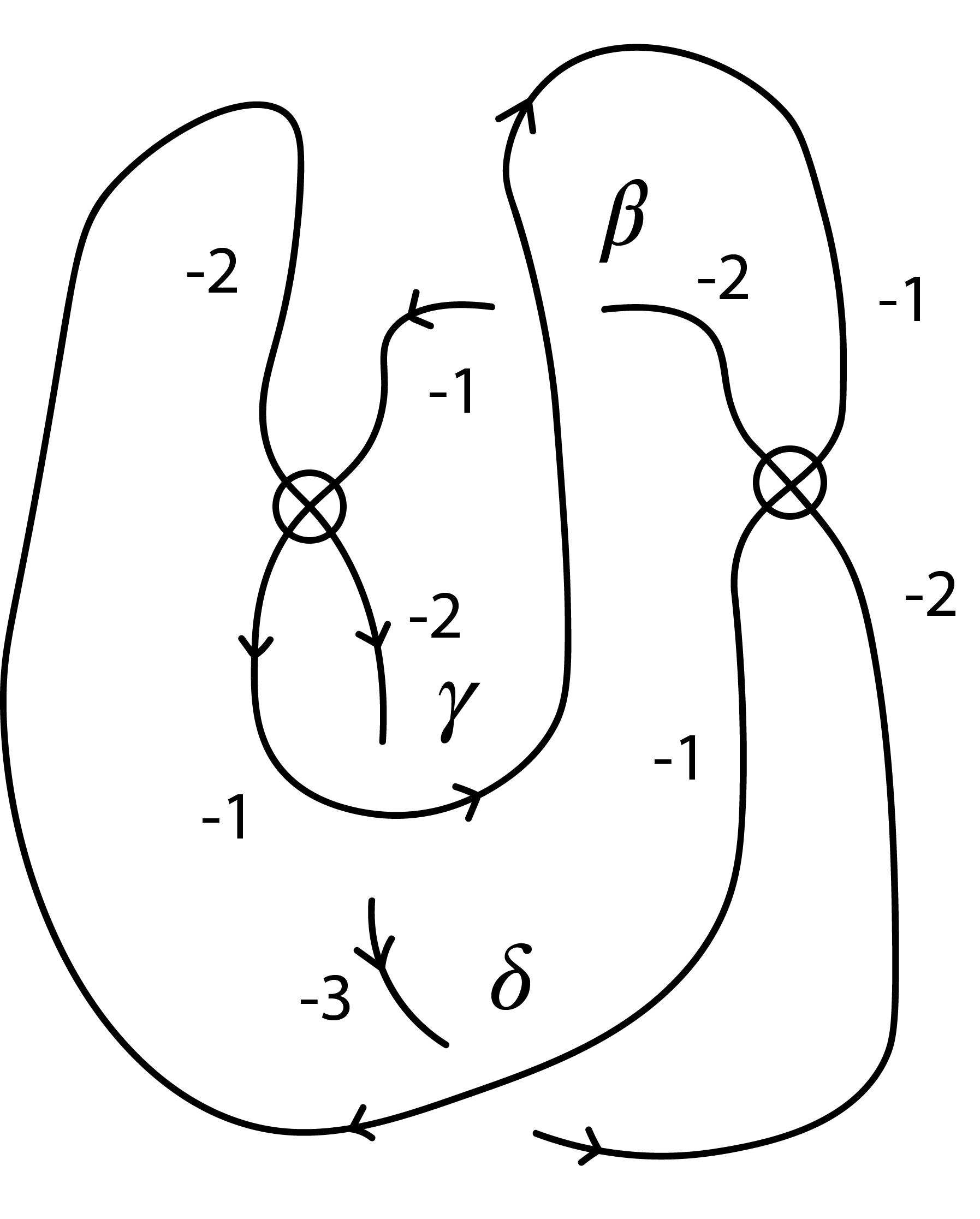} \quad
\includegraphics[scale =0.25]{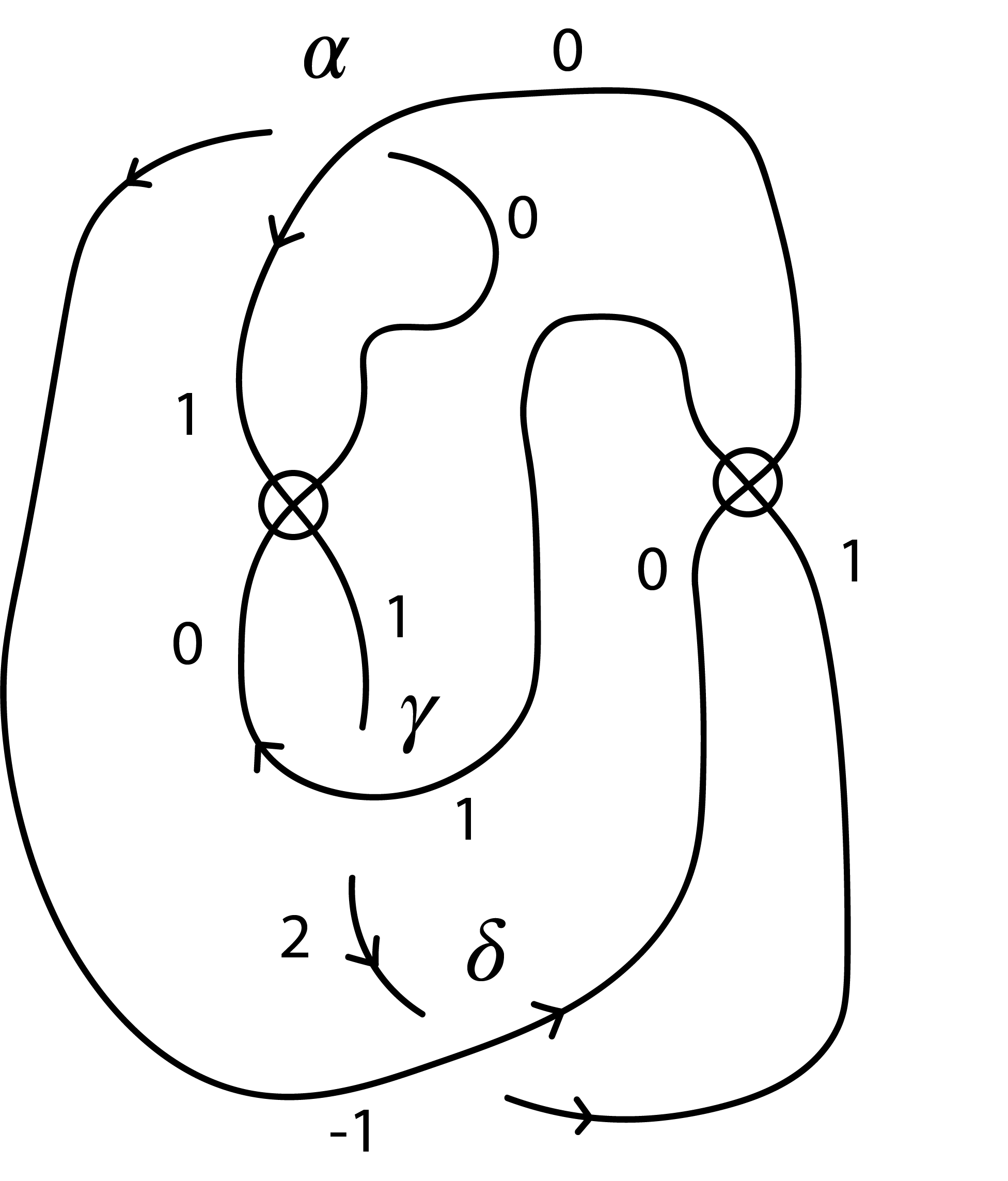}
\caption{Diagrams of knots $K$, $K_\alpha$ and $K_\beta$.} \label{fig22}
\end{figure}

Values of sign and index for classical crossings, and of $n$-dwrithe for knots $K$, $K_{\alpha}$ and $K_{\beta}$ are presented in Table~\ref{table1}.

\begin{table}[H] 
\caption{Values for $K$, $K_\alpha$ and $K_\beta$. \label{table1}}
\begin{tabular}{lll}
\hline
$K$  &$K_\alpha$  & $K_\beta$ \\  
\hline
$\sgn(\alpha) = -1$ &  $\sgn(\beta) = 1$ & $\sgn(\alpha) = -1$ \\
$\sgn(\beta) = 1$ & $\sgn(\gamma) = -1$ & $\sgn(\gamma) = 1$ \\
$\sgn(\gamma) = -1$  &  $\sgn(\delta) = 1$ & $\sgn(\delta) = -1$\\  
$\sgn(\delta) = -1$ & & \\ 
\hline
$\ind(\alpha) = 1$  & $\ind(\beta) = 1$ &  $\ind(\alpha) = 1$ \\
$\ind(\beta)  = 1$ & $\ind(\gamma) = 2$ &  $\ind(\gamma )= -1$ \\
$\ind(\gamma) = 0$  & $\ind(\delta)  = 1$ & $\ind(\delta) = -2$ \\ 
$\ind(\delta) = 0$ & & \\
\hline
$\nJ_1(K) = 0$ &$\nJ_1(K_{\alpha})  = 2$ & $\nJ_1 (K_{\beta})= -2$ \\
$ \nJ_2(K) = 0$ &  $\nJ_2(K_{\alpha}) = -1$ &   $ \nJ_2 (K_{\beta})= 1$  \\ 
\hline
\end{tabular}
\end{table}

To calculate the (1,1)-dwrithe of $K$ we observe that the index equals to $1$ for only two classical crossings: $\ind(\alpha) = 1$ and $\ind(\beta) = 1$. Therefore,
\begin{eqnarray*}
\nJ_{1,1}(K)  & = & 1 \cdot  \sum_{\ind(c)=1} \sgn(c) \nabla J_{1} (D_c)  \\
&  = &  \sgn(\alpha)  \nJ_1(K_\alpha) +  \sgn(\beta)  \nJ_1(K_\beta)  =   (-1) \cdot 2 + 1 \cdot (-2)  = -4.
\end{eqnarray*}

Now we consider an oriented virtual knot $K'$ with diagram presented in Figure~\ref{fig23}. 
\begin{figure}[!ht]
\centering
\includegraphics[scale = 0.26]{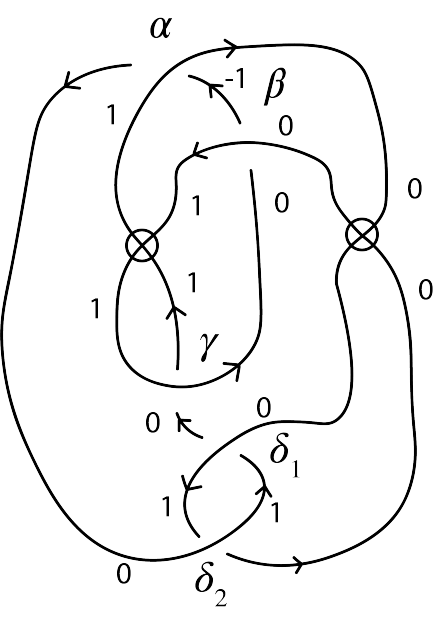}
\caption{Diagram of  $K'$.} \label{fig23}
\end{figure}
Virtual knot $K'$ has five classical crossings: $\alpha$, $\beta$, $\gamma$, $\delta_{1}$ and $\delta_{2}$. By direct calculations we get
$$
\begin{gathered}
\sgn(\beta) = \sgn(\delta_1) = \sgn(\delta_2) = -1, \qquad \sgn(\alpha) = \sgn(\gamma) = 1,\\
\ind(\gamma) = \ind(\delta_1) = \ind(\delta_2) = 0, \qquad  \ind(\alpha) = \ind(\beta) = 1.
\end{gathered}
$$
Therefore, $\nJ_1(K') = \nJ_2(K') = 0$.

It is easy to see, that $ K'_{\delta_1}$ is flat equivalent to the above considered $K=4.31$ and $ K'_{\delta_2}$ is flat equivalent to $K^{-}$. Therefore, $\nJ_{n} (K'_{\delta_{1}}) = \nJ_{2} (K'_{\delta_{1}}) = 0$ for $n=1,2$.
Moreover, all other smoothings, $K'_{\alpha}$, $K'_{\beta}$ and $K'_{\gamma}$, are flat equivalent to the unknot. Hence, we have  $\nJ_1(K'_c) = \nJ_2(K'_c) = 0$  for all classical crossings $c$ in the diagram.  Since $\nJ_1(K') = \nJ_2(K') = 0$, we get sets $T_{1} (K')= T_{2} (K')= \{ \alpha, \beta, \gamma, \delta_{1}, \delta_{2} \}$.   Therefore,
\begin{eqnarray*}
F_{K'}^{n}(t,\ell) & = & \sum_{c \in C(D)} \operatorname{sgn}(c) \left( t^{\text{Ind}(c)} - 1\right) \ell^{\nabla J_{n}(K_{c})}  \cr
& = & P_{K'} (t) = \sgn(\alpha) (t-1) + \sgn(\beta) (t-1) =0
\end{eqnarray*}
for any $n \in \mathbb N$. Thus, polynomials $F^{n}$ cannot distinguish $K'$ from the unknot.

At the same time, we have $\nJ_{1,1}(K') = 0$ by straightforward calculations, and $\nJ_{1,1}(K'_{\delta_1}) = -4$ and $\nJ_{1,1}(K'_{\delta_2}) = 4$ since $K'_{\delta_{1}}$ is flat equivalent to $K$ and $K'_{\delta_{2}}$ is flat equivalent to $K^{-}$. Therefore,
$T_{1,1} (D) = \{ \alpha, \beta, \gamma\}$ and
\begin{eqnarray*}
F^{1,1,1}_{K'}(t, \ell_1, \ell_2) & = & \sum_{c \in \{ \alpha, \beta, \gamma \}} \sgn (c) \left( t^{\ind(c)} - 1 \right) \ell_2^{\nJ_{1,1} (K'_{c})} \cr
& & \qquad
+  \sum_{c\in \{ \delta_{1}, \delta_{2} \}} \sgn(c) \left( t^{\ind(c)} \ell_2^{\nJ_{1,1} (K'_{c})} - \ell_2^{\nJ_{1,1} (K')} \right) \cr
& =  & (-1) (t^{0} \ell_2^{-4} -1) + (-1) (t^0 \ell_2^{4} -1) = -\ell_2^{-4} - \ell_2^{4} + 2.
\end{eqnarray*}
Thus, the polynomial $F^{1,1,1}$ does  distinguish virtual knot $K'$ from the unknot.
\end{proof}

\section{Flat span and invariants of 2-component virtual links} \label{sec8}

Let $L=K_1\cup K_2$ be an ordered oriented virtual 2-component link, where $K_{1}$ is the first component, and $K_{2}$ is the second component. Suppose that $L$ is presented by its diagram. Denote by $O(L)$ the set of crossings where $K_1$ passes over~$K_2$. Define the \emph{over linking number} $O_{\ell k}$ for $L$ as follows:
$$
O_{\ell k}(L) = \sum _{c \in O(L)} \operatorname{sgn}(c).
$$
Analogously, denote by $U(L)$ the set of crossings where $K_{1}$ passes under $K_{2}$ and define the \emph{under linking number} $U_{\ell k}$ for $L$ as follows:
$$
U_{\ell k}(L) = \sum _{c\in U(L)} \operatorname{sgn}(c).
$$

In virtual links the two linking numbers $ O_{\ell k}(L)$ and $ U_{\ell k}(L)$ may be not equal,  as can be seen for the ordered oriented virtual Hopf link $\mathcal H$ shown in Figure~\ref{fig24}. It is easy to see that $ O_{\ell k}(\mathcal H)= -1$ and $ U_{\ell k}(\mathcal H)= 0$. Also note that with reversing  order of components in $\mathcal H$, the two linking numbers will exchange. 
\begin{figure}[H]
\centering
\includegraphics[width=3.cm]{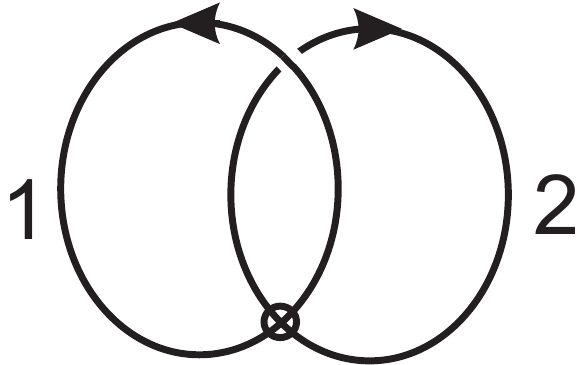}
\caption{Virtual Hopf link $\mathcal H$.}
  \label{fig24}
\end{figure}

\begin{definition}
For an ordered oriented virtual 2-component link $L$ define its \emph{span} by
$$
\operatorname{span} (L)= O_{\ell k}(L)- U_{\ell k}(L). 
$$
\end{definition}

\begin{lemma} \label{lemma1}
$\operatorname{span}(L)$ is an invariant of ordered oriented 2-component virtual link $L$.
\end{lemma}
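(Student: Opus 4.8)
The plan is to show that $\operatorname{span}(L) = O_{\ell k}(L) - U_{\ell k}(L)$ is unchanged under every generalized Reidemeister move. Since $\operatorname{span}$ is defined as a signed count over classical mixed crossings (those involving both $K_1$ and $K_2$), the virtual moves VRI, VRII, VRIII and the mixed move SV are immediate: they involve no classical crossings between the two components in a way that alters the sum, so neither $O(L)$ nor $U(L)$ changes. The RI-move only affects self-crossings of a single component, which lie in neither $O(L)$ nor $U(L)$, so it leaves $\operatorname{span}$ untouched as well. Thus the work concentrates on RII and RIII.

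First I would handle the RII-move. A mixed RII-move creates or destroys two crossings where $K_1$ meets $K_2$; both have the same over/under character relative to the two strands but opposite signs, because the two strands run in opposite relative directions at the two new crossings. Concretely, the two crossings either both lie in $O(L)$ (if $K_1$ is the over-strand in that pair of crossings) or both lie in $U(L)$, and their signs cancel in the corresponding linking number. Hence both $O_{\ell k}$ and $U_{\ell k}$ are individually invariant under RII, and so is their difference. I would verify this by drawing the two orientation cases of the RII-move and checking signs against Figure~\ref{fig3}.

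Next I would treat the RIII-move, which is the main obstacle, since three strands with independently assignable component-labels and orientations are involved, giving several cases. The key observation is that an RIII-move never changes which strand passes over which at any of the three crossings; it only slides a strand across a crossing of the other two. Consequently the partition of the three crossings into over-type and under-type (with respect to the ordered pair $(K_1,K_2)$) is preserved, and each individual crossing retains its sign because signs are determined locally by orientation and the over/under data, both of which persist. Therefore every crossing contributing to $O(L)$ before the move contributes an identical term after, and likewise for $U(L)$; both linking numbers are preserved crossing-by-crossing. I expect the only delicate point to be bookkeeping the cases where two or three of the labels $i,j,k$ coincide (so that a crossing is a self-crossing and drops out of both sums), but in each such degenerate case the crossing simply fails to be mixed and is irrelevant on both sides.

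Assembling these, $O_{\ell k}(L)$ and $U_{\ell k}(L)$ are each separately invariant under all generalized Reidemeister moves, hence so is $\operatorname{span}(L) = O_{\ell k}(L) - U_{\ell k}(L)$, proving the Lemma. I would remark that the argument in fact shows the stronger statement that the two linking numbers are individually invariant, with $\operatorname{span}$ measuring their failure to coincide — which is exactly the phenomenon illustrated by the virtual Hopf link $\mathcal H$ in Figure~\ref{fig24}.
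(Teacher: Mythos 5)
Your proof is correct and follows the same route as the paper: the paper's entire proof is the observation that $O_{\ell k}(L)$ and $U_{\ell k}(L)$ are themselves invariants, so their difference is too. You additionally carry out the move-by-move verification that the two linking numbers are invariant (which the paper takes as known), and that verification is accurate.
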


\begin{proof}
Since $O_{\ell k}(L)$ and $U_{\ell k}(L)$ are invariants, $\operatorname{span}(L)$ is an invariant as their difference.
\end{proof}

 \begin{lemma} \label{lemma2}
Let $L$ be an ordered oriented virtual link, and $L'$ is obtained from $L$ by reversing orientation on both component and preserving the order of its components. Then $\operatorname{span} (L')  = \operatorname{span} (L)$.
\end{lemma}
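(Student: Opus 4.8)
The plan is to reduce the statement to two elementary orientation-(in)dependence facts about individual crossings. First I would observe that the partition of the inter-component crossings into the over-set $O(L)$ and the under-set $U(L)$ depends only on which strand lies on top at each crossing, and not on any choice of orientation. Passing from $L$ to $L'$ changes neither the underlying diagram nor the labelling of the two components (the order is preserved), so the crossings where $K_1$ passes over $K_2$ and where $K_1$ passes under $K_2$ are literally the same. This gives the equalities of crossing sets $O(L') = O(L)$ and $U(L') = U(L)$.

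Next I would analyze the effect of the orientation reversal on the sign of a crossing $c \in O(L) \cup U(L)$. Such a $c$ is a crossing between the two distinct components $K_1$ and $K_2$, so exactly one of its two strands belongs to $K_1$ and the other to $K_2$. Reversing the orientation on both components therefore reverses the direction of both strands at $c$ simultaneously. The key step is to check that $\operatorname{sgn}(c)$ is invariant under this simultaneous reversal: reading $\operatorname{sgn}(c)$ off Figure~\ref{fig3} as the sign of the ordered frame formed by the two strand directions, replacing each direction vector $\vec{v}$ by $-\vec{v}$ multiplies the relevant determinant by $(-1)^2 = 1$, so the sign is unchanged. This is the only nontrivial point, and once it is established the remainder is bookkeeping. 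I expect this to be the main (and essentially the only) obstacle, since it contrasts sharply with reversing a \emph{single} strand, which flips the sign; the argument genuinely uses that \emph{both} components are reoriented.

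Combining the two facts, for every $c \in O(L)$ the corresponding crossing in $L'$ lies in $O(L') = O(L)$ with the same sign, whence $O_{\ell k}(L') = O_{\ell k}(L)$; the identical argument applied to $U(L)$ gives $U_{\ell k}(L') = U_{\ell k}(L)$. Taking the difference then yields
$$
\operatorname{span}(L') = O_{\ell k}(L') - U_{\ell k}(L') = O_{\ell k}(L) - U_{\ell k}(L) = \operatorname{span}(L),
$$
which is the desired conclusion.
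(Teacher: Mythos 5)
Your proposal is correct and follows essentially the same route as the paper: the paper's one-line proof likewise rests on the observation that reversing the orientation of both components leaves every crossing sign unchanged, from which the invariance of $\operatorname{span}$ is immediate. You merely make explicit the two supporting facts (that $O(L)$ and $U(L)$ are orientation-independent, and the determinant computation showing $\operatorname{sgn}(c)$ survives simultaneous reversal of both strands), which the paper leaves implicit.
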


\begin{proof}
Changing orientation on both components does not change the signs of classical crossings, hence $\operatorname{span}(L)$ does not change.
\end{proof}

\begin{definition} \label{def8}
Let $D$ be  a diagram of an ordered oriented virtual 2-component link $L = K_{1} \cup K_{2}$, and $C_{12}(D)$ be the set of all classical crossings in $D$ in which $K_{1}$ and $K_{2}$ meet. For $c \in C_{12}(D)$ let $\prescript{c}{}{D}$ be a knot diagram obtained by type-3 smoothing at $c \in D$ (see Figure~\ref{fig10}). For $n, k \in \mathbb Z$ consider set 
$$
I_{n,k} = \{ c \in D : \nabla J_{n}(\prescript{c}{}{D})=k \}. 
$$  
Define \emph{(n,k)-span} for $L$ as follows:
$$
\operatorname{span}_{n, k}(L)=\sum\limits_{c \in O(L) \cap I_{n, k}} \operatorname{sgn}(c)-\sum\limits_{c \in U(L) \cap I_{n, k}}\operatorname{sgn}(c). 
$$
\end{definition}

\begin{definition} \label{def9}
For a 2-component link $L$ define \emph{flat span} of $L$ as follows:
$$
\operatorname{fspan}_{n,k}(L)= \operatorname{span}_{n,k}(L) + \operatorname{span}_{n,-k}(L).
$$
\end{definition}

\begin{theorem} \label{theorem5}
Let $D$ be a diagram of an ordered oriented virtual 2-component link. Then $\operatorname{span}_{n,k}(D)$ is a link invariant and  $\operatorname{fspan}_{n,k}(D)$ is a flat link invariant.
\end{theorem}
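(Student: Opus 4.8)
The plan is to recognize $\operatorname{span}_{n,k}$ as an instance of the $I$-function from Definition~\ref{def5}, built from two weight functions defined on the consistent subset of inter-component crossings, and then to upgrade to flat invariance by a direct crossing-change analysis.

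First I would fix the consistent subset. In an ordered oriented $2$-component link $L=K_1\cup K_2$ the set $C_{12}(\mathcal D)$ of crossings in which $K_1$ and $K_2$ meet is consistent: its characteristic function is an even weight function, being the $2$-component case of Example~\ref{example2} (any RII pair lies simultaneously inside or outside $C_{12}$, and every RI-reducible crossing is a self-crossing, hence outside $C_{12}$). On $C_{12}(\mathcal D)$ I would introduce the even weight $w=(\nabla J_n)\circ S_3$, where $S_3$ is the type-$3$ smoothing. By Theorem~\ref{theorem2}, $S_3\in W^{even}_{\VL}$, and since $\nabla J_n$ is a flat virtual knot invariant it induces a homomorphism $\nabla J_n\colon\VL\to\mathbb Z$ on the generators hit by $S_3$; composing an even weight with a group homomorphism keeps it even, so $w\in W^{even}_{\mathbb Z}$, and by construction $w(c)=k$ exactly when $c\in I_{n,k}$. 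As the odd weight I would take $v(c)=\varepsilon(c)\sgn(c)$, where $\varepsilon(c)=+1$ if $c\in O(L)$ and $\varepsilon(c)=-1$ if $c\in U(L)$, recording whether $K_1$ passes over or under $K_2$. Conditions (C1)--(C3) hold because both $\sgn$ and the over/under relation are preserved, crossing by crossing, under RIII and SV; and $v$ is odd because in an RII pair the two crossings share the same over/under relation, so $\varepsilon(\alpha)=\varepsilon(\beta)$, yet have opposite signs, whence $v(\beta)=-v(\alpha)$.

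With this setup $\operatorname{span}_{n,k}(L)=\sum_{w(c)=k} v(c)=I(D;v,w,k)$. Because every crossing reducible by an RI-move is a self-crossing and therefore lies outside $C_{12}(\mathcal D)$, the set $R(w,L)$ is empty and the hypothesis $k\notin R(w,L)$ of Definition~\ref{def5} holds for every $k$. Theorem~\ref{theorem1}(i) then yields that $\operatorname{span}_{n,k}$ is an ordered oriented virtual link invariant, and hence so is $\operatorname{fspan}_{n,k}=\operatorname{span}_{n,k}+\operatorname{span}_{n,-k}$ under all generalized Reidemeister moves.

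It remains to check invariance under a crossing change at a crossing $c_0$; this is the step I expect to carry the real content. If $c_0\in C_{12}$, its own contribution is unchanged: the smoothing $\prescript{c_0}{}{D}$ erases $c_0$, so $w(c_0)$ is untouched, while $\sgn(c_0)$ and $\varepsilon(c_0)$ both reverse, leaving $v(c_0)=\varepsilon(c_0)\sgn(c_0)$ fixed and merely moving $c_0$ between the two sums of Definition~\ref{def8}. For every other crossing $c$ the labels $\sgn(c)$ and $\varepsilon(c)$ are unaffected, and $w(c)=\nabla J_n(\prescript{c}{}{D})$ is unaffected as well, since after smoothing at $c$ the crossing $c_0$ becomes an ordinary self-crossing of the resulting knot and $\nabla J_n$, being a flat invariant, is blind to a single crossing change. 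Thus each term of $\operatorname{span}_{n,k}$ is preserved, so $\operatorname{fspan}_{n,k}$ is invariant under crossing change and is therefore an ordered oriented flat virtual link invariant. The main obstacle is the case analysis verifying (C1)--(C3) and the oddness of $v$ through all orientations of the RII, RIII and SV moves while simultaneously tracking the over/under relation between $K_1$ and $K_2$; the crossing-change bookkeeping above is then routine.
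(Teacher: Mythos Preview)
Your setup for part (i) --- the odd weight $v(c)=\varepsilon(c)\sgn(c)$ on the consistent subset $C_{12}$, the even weight $w=\nabla J_n\circ S_3$, and the identification $\operatorname{span}_{n,k}=I(D;v,w,k)$ via Theorem~\ref{theorem1}(i) --- is exactly the paper's argument (there written with $w_1,w_{2,n}$).

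The flat-invariance step has a genuine gap. You claim that after a crossing change at $c_0\in C_{12}$ the value $w(c_0)=\nabla J_n(\prescript{c_0}{}{D})$ is untouched ``because the smoothing erases $c_0$''. But the orientation that type-3 smoothing induces on the resulting knot depends on the over/under data at the smoothed crossing; the paper records this explicitly in its proof (``crossing change leads to change of orientation induced after smoothing''). Hence $\prescript{c_0}{}{D'}$ is the orientation reversal of $\prescript{c_0}{}{D}$, and since $\nabla J_n(-K)=-\nabla J_n(K)$ one gets $w(c_0')=-w(c_0)$, not $w(c_0')=w(c_0)$. As written, your argument would prove that $\operatorname{span}_{n,k}$ itself is crossing-change invariant; this is false in general and is precisely the reason the symmetrization $\operatorname{fspan}_{n,k}=\operatorname{span}_{n,k}+\operatorname{span}_{n,-k}$ is introduced.

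The paper handles this through Theorem~\ref{theorem1}(ii): it computes $w_{2,n}^*=-w_{2,n}$ (from the orientation-reversal observation above) and $w_1^*=w_1$ (both $\sgn$ and the $O/U$ marker flip), and then identifies $I_f(D;w_1,w_{2,n},k)=\operatorname{span}_{n,k}+\operatorname{span}_{n,-k}$ directly. Your direct crossing-change route is easily repaired: with $w(c_0')=-w(c_0)$ the term for $c_0$ migrates between $\operatorname{span}_{n,k}$ and $\operatorname{span}_{n,-k}$ while $v(c_0)$ stays fixed, and all remaining terms are unchanged by the argument you already give; this then matches the paper's conclusion.
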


\begin{proof}
Let us consider two weight functions
$$
w_1(c) = \begin{cases}
\sgn(c), & c\in O(D), \\
-\sgn(c), & c \in U(D), \\
0, & \text{otherwise,}
\end{cases}
$$
and
$$
w_{2, n} (c) = \begin{cases} \nabla J_n(\prescript{c}{}{D}), & c \in O(D), \\
0, & \text{otherwise.} \end{cases}
$$

\noindent 
Then $w_{1} \in W^{odd}_\mathbb{Z}$, $w_{2,n} \in W^{even}_\mathbb{Z}$, and by Theorem~\ref{theorem1} (formular (\ref{eqn8})) the function 
\begin{eqnarray*}
I(D; w_1, w_{2,n}, k) & = &  \sum_{w_{2,n}(c) = k} w_{1} (c) \cr
& = & \sum_{c \in O(D) \cap I_{n,k}} \sgn(c) + \sum_{c \in U(D) \cap I_{n,k}} (-\sgn(c)) \cr
& = & \operatorname{span}_{n,k} (D) ,
\end{eqnarray*}
where  $I_{n,k} = \{ c \in D : w_{2,n} (c) = \nabla J_{n}(\prescript{c}{}{D})=k \}$
is an ordered oriented virtual link invariant.

Note, that $w_{2,n}^*(c) = w_{2,n}(c^*) = -w_{2,n}(c)$ since crossing change leads to change of orientation induced after smoothing, and $w_1^*(c) = w_1(c^*) = w_1(c)$ since $\sgn(c) = -\sgn(c^*)$. Moreover, $c \in O(D)$ if and only if $c^*\in U(D)$. Therefore, the value
\begin{eqnarray*}
I_{f}(K; w_1, w_{2,n}, k) & = & \sum_{w_{2,n}(c) = k} w_1(c) + \sum_{w_{2,n}^*(c) = k} w_1^*(c) \cr
& =  & \sum_{w_{2,n}(c) = k} w_1(c) + \sum_{w_{2,n}(c) = -k} w_1(c) \cr
& = &   \operatorname{span}_{n,k}(D) + \operatorname{span}_{n,-k}(D) \cr
& = &  \operatorname{fspan}_{n,k}(L),
 \end{eqnarray*}
is an ordered oriented flat virtual link invariant by Theorem~\ref{theorem1}.
\end{proof}

\begin{lemma} \label{lemma3}
Let $L= K_{1} \cup K_{2}$ be a 2-component link, and $L'  = K_{2} \cup K_{1}$ be obtained from $L$ by exchanging the order of its components. Then $\operatorname{fspan}_{n,k} (L') = -\operatorname{fspan}_{n,k} (L)$.
\end{lemma}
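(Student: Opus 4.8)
The plan is to reduce everything to the single identity $\operatorname{span}_{n,k}(L') = -\operatorname{span}_{n,k}(L)$, valid for every value of $k$; the assertion for $\operatorname{fspan}$ then follows immediately from Definition~\ref{def9}, since $\operatorname{fspan}_{n,k}(L') = \operatorname{span}_{n,k}(L') + \operatorname{span}_{n,-k}(L') = -\operatorname{span}_{n,k}(L) - \operatorname{span}_{n,-k}(L) = -\operatorname{fspan}_{n,k}(L)$. So the whole argument comes down to tracking how the three ingredients of $\operatorname{span}_{n,k}$ in Definition~\ref{def8}—the sets $O$ and $U$, the crossing signs, and the index set $I_{n,k}$—transform when the labels of the two components are exchanged.

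First I would record the behaviour of the over/under sets. The links $L$ and $L'$ share the very same underlying oriented diagram $D$ and differ only by the names attached to the components, so a crossing at which $K_1$ passes over $K_2$ is literally the same geometric crossing at which $K_2$ (now the first component of $L'$) passes under $K_1$ (now its second). Hence $O(L') = U(L)$ and $U(L') = O(L)$ as subsets of $C_{12}(D)$. Moreover $\operatorname{sgn}(c)$ is read off from the local orientations at $c$ as in Figure~\ref{fig3} and is insensitive to the labelling, so every crossing keeps its sign.

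The key step, and the one I expect to require the most care, is to verify that the set $I_{n,k}$ is unchanged by the relabelling. Here I would argue that the type-3 smoothed diagram $\prescript{c}{}{D}$ depends only on $D$ together with the orientations of its strands at $c$, and not on which strand is called $K_1$ and which $K_2$: the smoothing of Figure~\ref{fig10} merges the two meeting arcs into a single oriented knot in a way dictated purely by the local picture. Since the output is a one-component object, the component ordering has become meaningless, and the resulting oriented knot diagram—hence its flat invariant $\nabla J_n(\prescript{c}{}{D})$—is identical for $L$ and $L'$. Consequently $I_{n,k} = \{ c : \nabla J_n(\prescript{c}{}{D}) = k\}$ is literally the same set of crossings in both cases.

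Finally I would assemble these facts. Substituting $O(L') = U(L)$ and $U(L') = O(L)$ into Definition~\ref{def8}, and using that the signs and the set $I_{n,k}$ are unchanged, yields $\operatorname{span}_{n,k}(L') = \sum_{c \in U(L) \cap I_{n,k}} \operatorname{sgn}(c) - \sum_{c \in O(L) \cap I_{n,k}} \operatorname{sgn}(c) = -\operatorname{span}_{n,k}(L)$, which is exactly the identity to which the statement was reduced, completing the proof.
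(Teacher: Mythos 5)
Your proof is correct. The paper states Lemma~\ref{lemma3} without giving a proof, and your argument is the natural (and presumably intended) one: exchanging the order swaps $O(L)$ with $U(L)$ while leaving the crossing signs, the smoothed diagrams $\prescript{c}{}{D}$, and hence the sets $I_{n,k}$ unchanged, because the type-3 smoothing is determined by the local orientation and over/under data at $c$ rather than by the component labels --- which is consistent with how $w_{2,n}$ is handled in the proof of Theorem~\ref{theorem5}. Note that you actually establish the stronger identity $\operatorname{span}_{n,k}(L')=-\operatorname{span}_{n,k}(L)$ for every $k$, so for this particular conclusion the symmetrization over $k\leftrightarrow -k$ built into $\operatorname{fspan}$ is not even needed.
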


The following theorem gives a 3-parameter family of 3-variable polynomials which are invariants of oriented virtual knots.

\begin{theorem} \label{theorem6}
(i)  The polynomial
\begin{eqnarray*}
\widetilde F_{K}^{n, k, m}(t, \ell, v) & = & \sum_{c \in C(D)} \operatorname{sgn}(c) t^{\ind(c)} \ell^{\nabla J_{n}\left(D_{c}\right)} v^{\operatorname{fspan}_{k, m}\left(D^{c}\right)} \cr
& & - \sum_{c \in T(D)} \operatorname{sgn}(c) \ell^{\nabla J_{n}\left(D_{c}\right)} v^{\operatorname{fspan}_{k, m}\left(D^{c}\right)} 
   - \sum_{c \notin T(D)} \operatorname{sgn}(c) \ell^{\nabla J_{n}(D)} v^{\operatorname{fspan}_{k, m}\left(D^{c}\right)} ,  
\end{eqnarray*} 
where 
$$
T(D) = \{c \in C(D) \ | \ \nabla J_n(D_c) = \pm \nabla J_n(D) \text{ and } \operatorname{fspan}_{k,m}(D^c) = 0\}
$$
is an oriented virtual knot invariant. \\
(ii) The family of polynomial invariants $\widetilde F_D^{n,k,m}(t, \ell, v)$ is stron\-ger than polynomial invariants $F^n_D(t, \ell)$.
\end{theorem}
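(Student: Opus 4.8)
The plan is to prove both parts of Theorem~\ref{theorem6} by reducing them to machinery already established in the paper. For part (i), I would first observe that $\widetilde F_K^{n,k,m}(t,\ell,v)$ has exactly the structural form of the polynomial in Theorem~\ref{theorem3}, with $w=\sgn\in W^{odd}_{\mathbb Z}$ and $v=\ind\in W^{even}_{\mathbb Z}$ (noting $R(\ind,D)=\{0\}$ for knots by Example~\ref{example3}(i)) playing the roles of the outer weights, and with the exponent-carrying weight functions being $s_1(c)=\nabla J_n(D_c)$ and $s_2(c)=\operatorname{fspan}_{k,m}(D^c)$. The first of these is the type-1 smoothing composed with the flat invariant $\nabla J_n$, which is already known to be an even weight function. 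So the essential new verification is that $c\mapsto\operatorname{fspan}_{k,m}(D^c)$ is itself a well-defined even weight function on the knot diagram: here $D^c$ is the type-2 smoothing of $D$ at $c$, which turns the knot into a $2$-component ordered oriented link, and $\operatorname{fspan}_{k,m}$ of that link is a flat link invariant by Theorem~\ref{theorem5}.

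The key steps for (i), then, are as follows. First I would invoke Theorem~\ref{theorem2}, which asserts that the type-2 smoothing $S_2(c)=[D^c]$ is an even weight function taking values in $\VL$; composing with the flat link invariant $\operatorname{fspan}_{k,m}$ (extended linearly to $\VL$, as in Section~\ref{sec6}) yields $\operatorname{fspan}_{k,m}(D^c)\in W^{even}_{\mathbb Z}$. Second, I would identify $s_1=\nabla J_n\circ(\text{type-1})$ and $s_2=\operatorname{fspan}_{k,m}\circ S_2$ as the set $S=\{s_1,s_2\}$ of weight functions required by Theorem~\ref{theorem3}, check that the set $T(D)$ in the statement coincides with $\{c\mid s_i(c)\in R(s_i,L)\text{ for all }i\}$ using $R(s_1,K)=\{\pm\nabla J_n(K)\}$ from Example~\ref{example3}(ii) and the analogous description $R(s_2,K)=\{0\}$, and then apply Theorem~\ref{theorem3} verbatim to conclude invariance. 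The computation of $R(s_2,K)$ is the point deserving care: when $c$ is reducible by an RI-move, Theorem~\ref{theorem2} says $D^c$ is $[D]$ together with an added unknot, and one must check that $\operatorname{fspan}_{k,m}$ of a split-off unknot component contributes $0$, which fixes $R(s_2,K)=\{0\}$ and justifies the displayed form of $T(D)$.

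For part (ii), the argument is the standard comparison: first note that $\widetilde F_K^{n,k,m}(t,\ell,v)$ specializes to $F_K^n(t,\ell)$ when $v=1$ (equivalently for the degenerate choice of parameters making every $\operatorname{fspan}_{k,m}(D^c)$ irrelevant), so the new family is at least as strong as the $F$-polynomials. It then suffices to exhibit a single oriented virtual knot that $F^n$ fails to distinguish from the unknot while $\widetilde F^{n,k,m}$ succeeds. I would reuse the construction strategy from the proof of Theorem~\ref{theorem4}(ii): build (or cite from Green's table) a knot $K''$ all of whose $\nabla J_n(K''_c)$ and $\nabla J_n(K'')$ vanish, so that $F^n_{K''}(t,\ell)=P_{K''}(t)=0$, but arrange that some type-2 smoothings $K''^c$ produce $2$-component links with nonzero $\operatorname{fspan}_{k,m}$, forcing a nontrivial $v$-dependence in $\widetilde F^{n,k,m}_{K''}$.

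The main obstacle I anticipate is not the formal invariance—that is essentially inherited from Theorem~\ref{theorem3}—but rather the two subtle bookkeeping points: verifying that $\operatorname{fspan}_{k,m}\circ S_2$ is genuinely an even weight function on a \emph{one}-component diagram (so that the ordered-link structure imposed by the type-2 smoothing behaves coherently under RII, RIII and SV moves, exactly the cases handled in Theorem~\ref{theorem2}), and pinning down $R(\operatorname{fspan}_{k,m}\circ S_2,K)=\{0\}$ so that $T(D)$ is correctly characterized. For part (ii) the real work is the explicit example: one must present the diagram, compute all relevant type-2 smoothings, and evaluate $\operatorname{fspan}_{k,m}$ on each resulting $2$-component link—an effective but genuinely diagram-dependent calculation analogous to, but heavier than, the one carried out for $K'$ in Theorem~\ref{theorem4}.
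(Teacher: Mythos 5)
Your proof of part (i) takes exactly the paper's route: identify $s_1=\nabla J_n\circ F_1$ and $s_2=\operatorname{fspan}_{k,m}\circ F_2$ as even weight functions (via Theorems~\ref{theorem2} and~\ref{theorem5}) and apply Theorem~\ref{theorem3} with $w=\sgn$ and $v=\ind$. In fact you are more careful than the paper, whose proof of (i) is a two-sentence invocation of Theorem~\ref{theorem3} and does not spell out the points you flag, namely that $R(\operatorname{fspan}_{k,m}\circ F_2,K)=\{0\}$ (the split-off unknot produced by an RI-reducible crossing contributes nothing to $\operatorname{fspan}$) and that the displayed $T(D)$ coincides with $\{c \mid s_i(c)\in R(s_i,L)\text{ for all }i\}$.

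For part (ii) your strategy --- specialize at $v=1$ to recover $F^n_K(t,\ell)$, then exhibit a pair separated by $\widetilde F^{n,k,m}$ but not by $F^n$ --- is also the paper's, but your proposal stops exactly where the content of (ii) begins. Part (ii) is an existence claim, and you only describe how a witness \emph{might} be constructed ("build or cite a knot $K''$ \dots arrange that some type-2 smoothings produce nonzero $\operatorname{fspan}_{k,m}$") without producing one or verifying that the arrangement is achievable. The paper supplies the witness concretely: the knots $VK_3$ and $VK_4$ from the family of~\cite{GPV20}, which $F^2$, $F^4$ and $F^6$ fail to separate, are distinguished by $\widetilde F^{2,k,m}$ for $(k,m)=(2,0)$ and $(2,2)$ (Table~\ref{tbl2}); note it separates two nontrivial knots from each other rather than a knot from the unknot, though either would suffice. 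Until such an example is exhibited and computed, part (ii) remains unproven.
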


\begin{proof}
(i) Consider weight functions $s_1 = \nabla J_n \circ F_1$ and $s_2 = \operatorname{fspan}_{k,m} \circ F_2$, where $F_1$ is a type-1 smoothing and $F_2$ is a type-2 smoothing. Suppose $w = \sgn \in W^{odd}_{\mathbb Z}$ and $v = \ind \in W^{even}_{\mathbb Z}$. Then the statement follows from Theorem~\ref{theorem3}.  

(ii) It is easy to see from the definition of polynomials $\widetilde F_{K}^{n, k, m}(t, \ell, v)$ that substituting $v=1$ gives back polynomials $F_K^n(t, \ell)$. Therefore, if two virtual knots $K$ and $K'$ can be distinguished by $F_K^n(t, \ell)$ for some $n$, then $K$ and $K'$ can also be distinguished by $\widetilde F_{K}^{n, k, m}(t, \ell, v)$ for any values $k$ and $m$.

However in the case when $F_K^n(t, \ell)$ fails to distinguish virtual knots $K$ and $K'$, it may be possible to find a pair $(k,m)$ such that $K$ and $K'$ can be distinguished by  polynomials $\widetilde F_{K}^{n, k, m}(t, \ell, v)$.

Indeed, consider an infinite family of virtual knots $VK_{q}$, for positive integer $q$, presented in Figure~\ref{VKN}, where $c_{j}$, $j=1, \ldots, q$, are repeating blocks separated by dashed lines. These virtual knots were constructed in \cite{GPV20} to provide examples of  $q$-simplexes in the Gordian complex of virtual knots corresponding to the arc shift move defined in~\cite{GKP19}. It was shown in~\cite{GKP19} that the arc shift move together with generalized Reidemeister moves is an unknotting operation for virtual knots. By using the Kauffman $f$-polynomial it was shown in~\cite{GPV20} that virtual knots  $VK_{q}$ are distinct for distinct $q$.
\begin{figure}[h]
\unitlength=.25mm
\centering
\begin{picture}(0,90)(0,0)
\put(-300,0){\begin{picture}(0,0)
{\thinlines
\qbezier(90,-13)(90,-13)(90,-8)
\qbezier(90,-3)(90,-3)(90,2)
\qbezier(90,7)(90,7)(90,12)
\qbezier(90,17)(90,17)(90,22)
\qbezier(90,27)(90,27)(90,32)
\qbezier(90,37)(90,37)(90,42)
\qbezier(90,47)(90,47)(90,52)
\qbezier(90,57)(90,57)(90,62)
\qbezier(90,67)(90,67)(90,72)
\qbezier(90,77)(90,77)(90,82)
\qbezier(90,87)(90,87)(90,92)
}
{\thinlines
\qbezier(250,-13)(250,-13)(250,-8)
\qbezier(250,-3)(250,-3)(250,2)
\qbezier(250,7)(250,7)(250,12)
\qbezier(250,17)(250,17)(250,22)
\qbezier(250,27)(250,27)(250,32)
\qbezier(250,37)(250,37)(250,42)
\qbezier(250,47)(250,47)(250,52)
\qbezier(250,57)(250,57)(250,62)
\qbezier(250,67)(250,67)(250,72)
\qbezier(250,77)(250,77)(250,82)
\qbezier(250,87)(250,87)(250,92)
\put(45,90){\makebox(0,0)[cc]{$c_{0}$}}
\put(170,90){\makebox(0,0)[cc]{$c_{1}$}}
}
\thicklines
\qbezier(0,0)(0,0)(90,0)
\qbezier(0,0)(0,0)(0,30)
\qbezier(0,50)(0,50)(0,80)
\qbezier(0,80)(0,80)(90,80)
\qbezier(0,30)(0,30)(20,50)
\qbezier(0,50)(0,50)(6,44)
\qbezier(14,36)(20,30)(20,30)
\qbezier(20,30)(20,30)(40,50)
\qbezier(20,50)(20,50)(40,30)
\qbezier(40,30)(40,30)(60,50)
\qbezier(40,50)(40,50)(46,44)
\qbezier(54,36)(60,30)(60,30)
\qbezier(60,30)(60,30)(80,50)
\qbezier(60,50)(60,50)(80,30)
\qbezier(80,50)(80,50)(80,70)
\qbezier(80,70)(80,70)(90,70)
\qbezier(80,30)(80,30)(80,10)
\qbezier(80,10)(80,10)(90,10)
\put(30,40){\circle{6}}
\put(70,40){\circle{6}}
\qbezier(90,0)(90,0)(250,0)
\qbezier(90,10)(90,10)(120,10)
\qbezier(120,10)(120,10)(120,16)
\qbezier(120,24)(120,24)(120,26)
\qbezier(120,34)(120,34)(120,70)
\qbezier(120,70)(120,70)(90,70)
\qbezier(80,80)(80,80)(130,80)
\qbezier(130,80)(130,80)(130,34)
\qbezier(130,26)(130,26)(130,24)
\qbezier(130,16)(130,16)(130,10)
\qbezier(130,10)(130,10)(250,10)
\qbezier(100,20)(100,20)(100,60)
\qbezier(100,20)(100,20)(240,20)
\qbezier(240,20)(240,20)(240,70)
\qbezier(240,70)(240,70)(250,70)
\qbezier(100,20)(100,20)(100,60)
\qbezier(100,60)(100,60)(220,60)
\qbezier(110,30)(110,30)(110,50)
\qbezier(110,30)(110,30)(140,30)
\qbezier(110,50)(110,50)(140,50)
\put(120,50){\circle{6}}
\put(120,60){\circle{6}}
\put(130,50){\circle{6}}
\put(130,60){\circle{6}}
\qbezier(140,30)(140,30)(160,50)
\qbezier(140,50)(140,50)(160,30)
\qbezier(160,30)(160,30)(180,50)
\qbezier(160,50)(160,50)(166,44)
\qbezier(174,36)(180,30)(180,30)
\qbezier(180,30)(180,30)(200,50)
\qbezier(180,50)(180,50)(200,30)
\qbezier(200,30)(200,30)(220,50)
\qbezier(200,50)(200,50)(206,44)
\qbezier(214,36)(220,30)(220,30)
\put(150,40){\circle{6}}
\put(190,40){\circle{6}}
\qbezier(220,50)(220,50)(220,60)
\qbezier(220,30)(220,30)(230,30)
\qbezier(230,30)(230,30)(230,80)
\qbezier(230,80)(230,80)(250,80)
\end{picture}}
\put(-140,0){\begin{picture}(0,100)
\thicklines
{\thinlines
\qbezier(250,-13)(250,-13)(250,-8)
\qbezier(250,-3)(250,-3)(250,2)
\qbezier(250,7)(250,7)(250,12)
\qbezier(250,17)(250,17)(250,22)
\qbezier(250,27)(250,27)(250,32)
\qbezier(250,37)(250,37)(250,42)
\qbezier(250,47)(250,47)(250,52)
\qbezier(250,57)(250,57)(250,62)
\qbezier(250,67)(250,67)(250,72)
\qbezier(250,77)(250,77)(250,82)
\qbezier(250,87)(250,87)(250,92)
\put(170,90){\makebox(0,0)[cc]{$c_{2}$}}
}
\qbezier(90,0)(90,0)(250,0)
\qbezier(90,10)(90,10)(120,10)
\qbezier(120,10)(120,10)(120,16)
\qbezier(120,24)(120,24)(120,26)
\qbezier(120,34)(120,34)(120,70)
\qbezier(120,70)(120,70)(90,70)
\qbezier(80,80)(80,80)(130,80)
\qbezier(130,80)(130,80)(130,34)
\qbezier(130,26)(130,26)(130,24)
\qbezier(130,16)(130,16)(130,10)
\qbezier(130,10)(130,10)(250,10)
\qbezier(100,20)(100,20)(100,60)
\qbezier(100,20)(100,20)(240,20)
\qbezier(240,20)(240,20)(240,70)
\qbezier(240,70)(240,70)(250,70)
\qbezier(100,20)(100,20)(100,60)
\qbezier(100,60)(100,60)(220,60)
\qbezier(110,30)(110,30)(110,50)
\qbezier(110,30)(110,30)(140,30)
\qbezier(110,50)(110,50)(140,50)
\put(120,50){\circle{6}}
\put(120,60){\circle{6}}
\put(130,50){\circle{6}}
\put(130,60){\circle{6}}
\qbezier(140,30)(140,30)(160,50)
\qbezier(140,50)(140,50)(160,30)
\qbezier(160,30)(160,30)(180,50)
\qbezier(160,50)(160,50)(166,44)
\qbezier(174,36)(180,30)(180,30)
\qbezier(180,30)(180,30)(200,50)
\qbezier(180,50)(180,50)(200,30)
\qbezier(200,30)(200,30)(220,50)
\qbezier(200,50)(200,50)(206,44)
\qbezier(214,36)(220,30)(220,30)
\put(150,40){\circle{6}}
\put(190,40){\circle{6}}
\qbezier(220,50)(220,50)(220,60)
\qbezier(220,30)(220,30)(230,30)
\qbezier(230,30)(230,30)(230,80)
\qbezier(230,80)(230,80)(250,80)
\put(270,80){\makebox(0,0)[cc]{$\cdots$}}
\put(270,70){\makebox(0,0)[cc]{$\cdots$}}
\put(270,10){\makebox(0,0)[cc]{$\cdots$}}
\put(270,0){\makebox(0,0)[cc]{$\cdots$}}
\end{picture}}
\put(50,0){\begin{picture}(0,100)
{\thinlines
\qbezier(90,-13)(90,-13)(90,-8)
\qbezier(90,-3)(90,-3)(90,2)
\qbezier(90,7)(90,7)(90,12)
\qbezier(90,17)(90,17)(90,22)
\qbezier(90,27)(90,27)(90,32)
\qbezier(90,37)(90,37)(90,42)
\qbezier(90,47)(90,47)(90,52)
\qbezier(90,57)(90,57)(90,62)
\qbezier(90,67)(90,67)(90,72)
\qbezier(90,77)(90,77)(90,82)
\qbezier(90,87)(90,87)(90,92)
\put(170,90){\makebox(0,0)[cc]{$c_{q}$}}
}
\thicklines
\qbezier(90,0)(90,0)(250,0)
\qbezier(90,10)(90,10)(120,10)
\qbezier(120,10)(120,10)(120,16)
\qbezier(120,24)(120,24)(120,26)
\qbezier(120,34)(120,34)(120,70)
\qbezier(120,70)(120,70)(90,70)
\qbezier(90,80)(90,80)(130,80)
\qbezier(130,80)(130,80)(130,34)
\qbezier(130,26)(130,26)(130,24)
\qbezier(130,16)(130,16)(130,10)
\qbezier(130,10)(130,10)(250,10)
\qbezier(100,20)(100,20)(100,60)
\qbezier(100,20)(100,20)(240,20)
\qbezier(240,20)(240,20)(240,70)
\qbezier(240,70)(240,70)(250,70)
\qbezier(100,20)(100,20)(100,60)
\qbezier(100,60)(100,60)(220,60)
\qbezier(110,30)(110,30)(110,50)
\qbezier(110,30)(110,30)(140,30)
\qbezier(110,50)(110,50)(140,50)
\put(120,50){\circle{6}}
\put(120,60){\circle{6}}
\put(130,50){\circle{6}}
\put(130,60){\circle{6}}
\qbezier(140,30)(140,30)(160,50)
\qbezier(140,50)(140,50)(160,30)
\qbezier(160,30)(160,30)(180,50)
\qbezier(160,50)(160,50)(166,44)
\qbezier(174,36)(180,30)(180,30)
\qbezier(180,30)(180,30)(200,50)
\qbezier(180,50)(180,50)(200,30)
\qbezier(200,30)(200,30)(220,50)
\qbezier(200,50)(200,50)(206,44)
\qbezier(214,36)(220,30)(220,30)
\put(150,40){\circle{6}}
\put(190,40){\circle{6}}
\qbezier(220,50)(220,50)(220,60)
\qbezier(220,30)(220,30)(230,30)
\qbezier(230,30)(230,30)(230,80)
\qbezier(230,80)(230,80)(250,80)
\qbezier(250,10)(250,10)(250,70)
\qbezier(250,0)(250,0)(260,0)
\qbezier(250,80)(250,80)(260,80)
\qbezier(260,0)(260,0)(260,80)
\end{picture}}
\end{picture}
\caption{Virtual knot $VK_{q}$.} \label{VKN}
\end{figure}
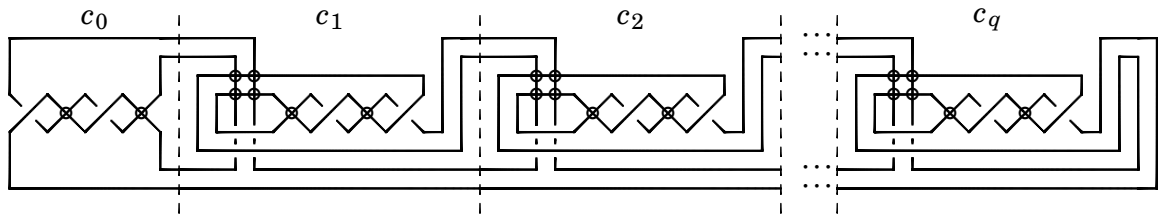

For $q \leqslant 10$ polynomials  $F_{VK_{q}}^n(t, \ell)$ were computed in~\cite{GPV20}.  It turns out that each of polynomials $F^{2}(t,\ell)$, $F^{4}(t,\ell)$ and $F^{6}(t,\ell)$ is able to distinguish virtual knots $VK_{1}$, $VK_{2}$ and $VK_{3}$, but not able to distinguish virtual knots $VK_{3}, \ldots, VK_{10}$.

It is shown in Table~\ref{tbl2}  that virtual knots $VK_{3}$ and $VK_{4}$  can be distinguished by each of polynomials $\widetilde F^{2, k, m}(t, \ell, v)$ for $(k,m) = (2, 0)$ and  $ (k,m)= (2, 2)$.
\begin{table}[H]
\caption{Polynomials $\widetilde F_{VK_{3}}^{2, k, m}(t, \ell,v)$ and $\widetilde F_{VK_{4}}^{2, k, m}(t, \ell,v)$.  \label{tbl2}}
\begin{tabular}{ccc}
\hline 
knot & $(k,m)$  & $\widetilde F^{2,k,m}(t, \ell, v)$  \\ 
\hline 
$VK_3$ & $(2,0)$ & $\begin{gathered} \ell^{-1}-t^{-2}v^{-8}-\ell^2v^6+\ell^{-2}+3\ell^2 +v^4+2v^6 +v^{-8} \cr -v^8-t^2v^4\ell^{-2}-t^2v^6\ell^{-4}+t^2v^8\ell^{-3}-5  \end{gathered} $ \\ 
\hline  
$VK_3 $ & $(2,2)$ & $ \begin{gathered} \ell^{-1}-v^2t^{-2}-\ell^2v^{-2}+\ell^{-2}+3\ell^2+v^{-2} \cr +v^2-t^2\ell^{-2}+t^2\ell^{-3}v^{-2}-t^2\ell^{-4}v^{-2}-4 \end{gathered} $ \\ 
\hline 
$VK_4$ & $(2,0)$ & $\begin{gathered} \ell^{-1}-t^{-2}v^{-8}-\ell^2v^6+\ell^{-2}+3\ell^2+v^4+2v^6+v^{-8} \cr - v^8-2t^2v^4\ell^{-2}+t^2v^4\ell^{-3}+t^2v^8\ell^{-2}-t^2v^6\ell^{-4}-5 \end{gathered}$ \\ 
\hline 
$VK_4$ & $(2,2)$ & $ \begin{gathered} \ell^{-1}-v^2t^{-2}-\ell^2v^{-2}+\ell^{-2}+3\ell^2+v^{-2}  \cr +v^2-2t^2\ell^{-2}+t^2\ell^{-3}+t^2\ell^{-2}v^{-2}-t^2\ell^{-4}v^{-2}-4 \end{gathered}$  \\ 
\hline
\end{tabular}
\end{table}
This observation completes the proof.
\end{proof}




\newcommand{\etalchar}[1]{$^{#1}$}

\end{document}